\documentclass[12pt,leqno]{amsart}
\usepackage{amssymb,amsthm,amsmath,latexsym}
\usepackage{tikz-cd}
\usepackage{array}
\newtheorem{theorem}{\sc Theorem}[section]

\newtheorem{lem}[theorem]{\sc Lemma}
\newtheorem{prop}[theorem]{\sc Proposition}

\newtheorem{ex}[theorem]{\sc Example}
\newtheorem{defi}[theorem]{\sc Definition}

 \newtheorem*{thmA}{Theorem A}
 \newtheorem*{thmB}{Theorem B}
 \newtheorem*{thmC}{Theorem C}

 \DeclareMathOperator{\Syl}{Syl}
 \DeclareMathOperator{\Aut}{Aut}

 \DeclareMathOperator{\lcm}{lcm}

\title[Automorphism Orbits]{Some families of locally graded groups with finitely many orbits under automorphisms}
\author[Dantas]{Alex C. Dantas}
\author[Sousa]{Juc\'elia F. de Sousa}
\address{Departamento de Matem\'atica, Universidade de Bras\'ilia,
Brasilia-DF, 70910-900 Brazil}
\email{(Dantas) alexcdan@gmail.br}\email{(Sousa)  ogirsm@yahoo.com.br}
\subjclass[2010]{20A10, 20E36}
\keywords{Locally graded groups, residualy finite groups, Boolean powers, Mal'cev $\mathbb{Q}$-completation, free nilpotent groups.}

\begin{document}

\maketitle

\begin{abstract}
    In this work, we study three families of locally graded groups with finitely many orbits under automorphisms. We prove that: (i) a residually finite group with finitely many orbits under automorphisms is locally finite and has finite exponent; (ii) a finitely generated locally graded group with finitely many orbits under automorphisms is finite; and (iii) the Mal'cev $\mathbb{Q}$-completion of an $r$-generated free nilpotent group of class $c$ has finitely many orbits under automorphisms if and only if either $r = 2$ and $c = 3$, or $c \leq 2$.   
\end{abstract}

\section{Introduction}

A group $G$ is called locally graded if each finitely generated subgroup of $G$ has a proper subgroup of finite index. It is not difficult to see that the class of locally graded groups is closed under taking subgroups, extensions, and Cartesian products. Locally finite groups, residually finite groups, and linear groups are examples of locally graded groups. The question “Which locally graded groups have finitely many orbits under automorphisms?” is a major open problem. In recent decades many families of locally graded groups have been studied via their automorphism orbits. For example, the following locally graded groups with finitely many automorphism orbits have been considered: locally finite groups in \cite{N_0}, locally compact groups in \cite{S1, S2}, abelian and abelian-by-finite groups in \cite{J, MS2}, FC-groups in \cite{BD}, and virtually nilpotent groups in \cite{BDdM}. 

In this paper we treat three families of locally graded groups with finitely many automorphism orbits: residually finite groups, finitely generated locally graded groups, and the Mal'cev $\mathbb{Q}$-completion of a finitely generated free nilpotent group.  

In the first main result of this paper, we investigate residually finite groups that have finitely many automorphism orbits.

\begin{thmA} \label{thm:rf}
Let $G$ be a residually finite group with finitely many orbits under automorphisms. Then $G$ has finite exponent. Moreover, $G$ is locally finite. 
\end{thmA}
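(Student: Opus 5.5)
The plan is to prove Theorem A in three stages: first show that $G$ is periodic, then that the element orders are bounded, and finally get local finiteness from Zelmanov's solution of the restricted Burnside problem. The orbit count enters through one observation. Automorphisms preserve element orders, and more generally preserve divisibility properties such as ``$g$ is an $n$-th power''. So if $G$ has $k$ orbits, then $G$ has at most $k$ distinct element orders, and any divisibility property held by one element of an orbit is held by all of them.

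\emph{Bounded exponent once $G$ is periodic.} If $G$ is periodic, its finitely many orbits give finitely many element orders $n_1,\dots,n_k$. Hence $G$ has finite exponent $e=\lcm(n_1,\dots,n_k)$.

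\emph{Local finiteness.} Let $H$ be a finitely generated subgroup of $G$. It is residually finite, since this property passes to subgroups, and it has exponent dividing $e$. Every finite quotient of $H$ is a $d$-generated group of exponent dividing $e$. By Zelmanov's positive solution of the restricted Burnside problem, such quotients have order bounded by some $f(d,e)$. Residual finiteness then forces $|H|\le f(d,e)$, so $H$ is finite.

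\emph{Periodicity: the main obstacle.} Suppose $x\in G$ has infinite order, and fix a prime $p$ larger than $k$. The elements $x^{p^i}$ for $0\le i\le k$ all have infinite order and fall into at most $k$ orbits. So two of them, say $x^{p^i}$ and $x^{p^j}$ with $i<j$, lie in the same orbit. Setting $y=x^{p^i}$, we get $\alpha(y)=y^{p^{m}}$ for some $\alpha\in\Aut(G)$ and $m\ge1$.

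Doing this for infinitely many primes $p$ and using pigeonhole on the $k$ orbits, we can fix a single orbit $O$ of infinite-order elements with the following property: for infinitely many primes $p$ there is $m_p\ge1$ such that every $g\in O$ is an automorphic image of $g^{p^{m_p}}$. Iterating, every $g\in O$ is a $p^{t}$-th power for all $t$ and for infinitely many primes $p$. In particular $g$ is divisible by every positive integer built from this infinite set of primes.

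Now fix $g\in O$ and a finite-index normal subgroup $N$ with $g\notin N$. The aim is a contradiction from the image of $g$ in $G/N$. The difficulty is that $N$ need not be $\Aut(G)$-invariant, so automorphisms do not descend to $G/N$. Moreover, $p$-th powering is bijective on $G/N$ for $p\nmid|G/N|$, so divisibility by such primes alone gives no contradiction.

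I would first try to exploit $g\sim g^{p^{m}}$ directly rather than mere divisibility. For each such $p$, choose $N_p$ of finite index with $g^{p^{m_p}}\ldots$ and compare the orders of $gN$ and $g^{a}N$ for suitable exponents $a$. Combining the finitely many orbits with the infinitely many exponents $p^{m_p}$, the aim is to find two elements in one orbit whose images must have different orders in every finite quotient separating them. If that fails, the fallback is to pass to the subgroup generated by $O$, which is characteristic. There I would use that each element has only finitely many possible $\langle g\rangle$-orbit representatives to produce a characteristic subgroup of finite index avoiding $g$. In that quotient the automorphisms descend, and $g\sim g^{p^m}$ together with the bounded number of orbits forces $gN$ to have order coprime to infinitely many primes, yet equal to the order of $g^{n}N$ for arbitrary $n$. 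Taking $n=|G/N|$ then gives $gN=1$, a contradiction. I expect constructing this $\Aut(G)$-invariant finite-index subgroup to be the hardest step of the proof.
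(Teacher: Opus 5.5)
\textbf{Gap: the periodicity step.} Your second and third stages are correct and agree with the paper. Finitely many orbits give finitely many element orders, hence finite exponent once $G$ is periodic. Your local-finiteness argument (a finitely generated subgroup is residually finite of exponent dividing $e$, so Zelmanov bounds all its finite quotients, so it is finite) spells out the residual-finiteness step that the paper leaves implicit.

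The periodicity step, however, is never carried out. As you note yourself, infinite divisibility of the elements of $O$ by infinitely many primes does not contradict residual finiteness. Your fallback, producing a characteristic subgroup of finite index avoiding $g$, has no mechanism behind it; the remark about finitely many $\langle g\rangle$-orbit representatives does not yield one. Residual finiteness plus finitely many orbits cannot supply such subgroups by themselves. An infinite elementary abelian $p$-group is residually finite with two orbits, yet its only characteristic subgroups are $1$ and itself. So the infinite order of $g$ would have to enter essentially, and you do not say how. The endgame is also unjustified: you only know $g\sim g^{p^{m_p}}$ for primes $p$ in some infinite set, which does not give $|gN|=|g^{n}N|$ for $n=|G/N|$.

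\textbf{Missing idea.} The finite-index subgroup never needs to be characteristic. The subgroup $G^{n}$ generated by all $n$-th powers has three properties:
\begin{itemize}
\item it is characteristic;
\item it is contained in every normal subgroup of index $n$;
\item like every characteristic subgroup, it is a union of $\Aut(G)$-orbits.
\end{itemize}
Let $y$ have infinite order. Put $m_0=1$ and choose inductively normal subgroups $N_s$ of finite index $n_s$ with $y^{m_{s-1}}\notin N_s$, where $m_s=n_1\cdots n_s$. This is possible because each $y^{m_{s-1}}\neq 1$. For $a<b$, the element $y^{m_a}$ lies outside $N_{a+1}\supseteq G^{n_{a+1}}$. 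On the other hand, $y^{m_b}=(y^{m_b/n_{a+1}})^{n_{a+1}}$ lies in $G^{n_{a+1}}$. Hence the powers $y^{m_s}$, $s\ge 0$, lie in pairwise distinct orbits, contradicting $\omega(G)<\infty$.

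The paper packages the same idea as an infinite strictly descending chain $G>G^{n_1}>(G^{n_1})^{n_2}>\cdots$ of nontrivial characteristic subgroups, taking $N_s$ inside $N_{s-1}$. Such a chain cannot exist when there are only finitely many orbits. Neither $g\sim g^{p^m}$ nor any divisibility argument is needed.
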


Note that a finite exponent abelian group is residually finite and has finitely many automorphism orbits; in contrast, a direct product of infinitely many copies of a non-abelian finite group does not (see \cite[Theorem A]{BD}). By the theorem above, a residually finite group with finitely many automorphism orbits embeds as a subgroup of a Cartesian product of copies of a finite group. Likely the first example of a non-abelian infinite residually finite group with finitely many automorphism orbits arose in the study of \emph{$\aleph_{0}$-categorical groups}. A countable group $G$ is $\aleph_{0}$-categorical if and only if $Aut(G)$ has finitely many orbits in its action on $G^n$ for each $n \geq 1$ (for details on $\aleph_{0}$-categorical groups, see \cite{N_0, R}). In \cite{N_0}, Apps characterized the countable non-abelian characteristically simple residually finite $\aleph_{0}$-categorical groups by showing they are \emph{Boolean powers} of finite simple groups, see Section 2 for a definition of Boolean power. On this topic, we prove the following.

\begin{prop}
 Let $G$ be a group and let $G_0 = 1$, $G_1$, $G_2$, ..., $G_{n-1}$, $G_{n} = G$ be characteristc  subgroups of $G$ such that
    $$1 = G_0 < G_{1} < G_{2} < \dots G_{n-1} < G_{n} = G$$
    and $G_{i + 1}/G_{i}$ is characteristically simple and residually finite for all $i = 0, 1, \dots, n-1$. If $\omega(G) \leq 7$, then $G$ is a soluble-by-finite group.   
\end{prop}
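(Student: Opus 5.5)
We argue by contradiction: the chain has at most seven nontrivial factors, and we will show that a nonabelian factor forces too many orbits. Here $\omega(G)$ denotes the number of $\Aut(G)$-orbits on $G$.

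\textbf{Orbit counting along the chain.} Each $G_i$ is characteristic, so every automorphism of $G$ preserves every $G_i$. Hence an orbit lies entirely inside $G_{i+1}\setminus G_i$ for a single $i$. Together with the orbit $\{1\}$, this gives
$$\omega(G)\ \ge\ 1+\sum_{i=0}^{n-1}\omega_i ,$$
where $\omega_i\ge 1$ is the number of $\Aut(G)$-orbits on $G_{i+1}\setminus G_i$. Moreover, $\Aut(G)$ acts on $G_{i+1}/G_i$, and distinct orbits of this induced action on nontrivial elements pull back to distinct orbits on $G_{i+1}\setminus G_i$. So $\omega_i$ is at least the number of orbits of the induced action on the nontrivial elements of $Q_i=G_{i+1}/G_i$, which in turn is at least $\omega(Q_i)-1$.

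\textbf{Structure of a nonabelian factor.} Each $Q_i$ is characteristically simple, residually finite, and has finitely many $\Aut(Q_i)$-orbits, bounded by the count above. By Theorem A, $Q_i$ is locally finite of finite exponent. A characteristically simple locally finite group of finite exponent is either elementary abelian or nonabelian, and in the nonabelian case residual finiteness together with Apps' description yields a structure like a Boolean power of a finite nonabelian simple group $S$.

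\textbf{Main step: a nonabelian factor costs at least seven orbits.} Suppose some $Q_i$ is nonabelian. I would show that $Q_i$ has at least $7$ $\Aut(Q_i)$-orbits, including the trivial one. Every element of $Q_i$ lies in a finite subgroup that sits inside a finite power $S^k$, and the element orders already separate orbits. A finite nonabelian simple group has at least four distinct element orders; for $A_5$ these are $1,2,3,5$. In a Boolean power one additionally gets products of elements of different orders, for example orders $6$, $10$ and $15$, giving at least $7$ orders, hence at least $7$ orbits in $Q_i$ including the identity. By Burnside's $p^aq^b$ theorem, $|S|$ has at least three prime divisors $p,q,r$. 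Whenever $S^k$ with $k\ge 2$ occurs, or in the Boolean-power situation, this produces orders $1,p,q,r,pq,pr,qr$.

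If $Q_i=S$ is itself finite simple, I would instead use the classification facts for small simple groups: for example $A_5$ has $5$ conjugacy classes and $4$ element orders. In that case the $7$ orbits must be obtained by combining with the other factors.

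\textbf{Finishing the count, and the main obstacle.} The plan is to show that a nonabelian factor, together with the orbits $\{1\}$ and those contributed by the remaining factors, forces $\omega(G)\ge 8$. Then $\omega(G)\le 7$ implies every $Q_i$ is elementary abelian, so $G$ is soluble, and in particular soluble-by-finite.

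The obstacle is the case where the nonabelian factor is finite, for instance $G=A_5$ with $\omega(A_5)=4$. There $G$ is finite, hence trivially soluble-by-finite. So the argument should split into two cases:
\begin{itemize}
\item[(a)] Every nonabelian $Q_i$ is finite. If each nonabelian factor is finite, I would let $j$ be the largest index with $Q_j$ nonabelian. The plan is to show that $G_{j+1}$ is finite-by-soluble-ish, and conclude that $G$ has a soluble normal subgroup of finite index. This step, a finite section sitting under infinite abelian layers, is the delicate part. I would handle it with the centralizer of the finite section, which has finite index, and then note that a group whose nonabelian factors are all finite has its soluble part of finite index.
\item[(b)] Some $Q_i$ is infinite nonabelian. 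Then $Q_i$ contains $S^k$ for all $k$, and the order count above gives at least $7$ nontrivial orbits in $Q_i$ alone, so $\omega(G)\ge 8$, a contradiction.
\end{itemize}
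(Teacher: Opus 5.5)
Your final two-case plan is sound, and its decisive part (b) is the paper's argument: an infinite nonabelian factor is an $S$-space by the cited Lemma, so it contains direct powers of $S$, and element orders (which automorphisms preserve) force too many orbits. But the count as you wrote it is one short, and that matters because the bound is sharp. From $S^k$ with $k\ge 2$ you list only the orders $1,p,q,r,pq,pr,qr$, i.e.\ six nontrivial orbits. This does not contradict $\omega(G)\le 7$ when $n=1$; indeed $A_5\times A_5$ has no element of order $30$.

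Your claim in (b) of ``at least $7$ nontrivial orbits in $Q_i$ alone'' needs the eighth order $pqr$, which comes from $S\times S\times S\le Q_i$. An infinite $S$-space contains $S^k$ for every $k$, since a finite subset of size $>|S|^k$ can only lie in a copy of $S^m$ with $m>k$. This is exactly why the paper takes $H\cong S\times S\times S$, and the paper's $A_5^{\mathcal{R}_1}$ with $8$ orbits shows you cannot do with less. You also do not need Theorem A or Apps' Boolean-power description, which concerns countable $\aleph_0$-categorical groups and is not available for an arbitrary factor here. The Lemma applies directly, because a nontrivial residually finite group has a proper subgroup of finite index. And $\omega(Q_i)\le\omega(G)$ follows from your pull-back observation.

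Your case (a) supplies the reduction that the paper only asserts (``we can suppose that $G$ is a non-abelian characteristically simple group''), so it is a genuine addition. As written, though, it restates the conclusion rather than proving it. It closes in one step. Let $C$ be the intersection of the kernels of the conjugation actions $G\to\Aut(G_{i+1}/G_i)$ over the finitely many $i$ with $Q_i$ finite nonabelian; then $C$ is normal of finite index. A nonabelian characteristically simple group has trivial centre, so for those $i$ we get $(C\cap G_{i+1})G_i/G_i\le Z(Q_i)=1$. For the remaining $i$, the factor $(C\cap G_{i+1})/(C\cap G_i)$ embeds in the abelian group $Q_i$. Hence $C$ is soluble of derived length at most $n$, and $G$ is soluble-by-finite. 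Finally, drop the ``main step'' goal of showing that every nonabelian $Q_i$ has at least $7$ orbits. As you noticed yourself, it is false for finite factors, since $\omega(A_5)=4$.
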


In Section 2, we use Boolean powers of a non-abelian simple group and give an example of a non-soluble residually finite group with eight orbits under automorphisms. To construct this example, we used the following proposition.

\begin{prop}
Consider $\Gamma = G^{\mathcal{R}}$ a Boolean power of a group $G$. Suppose that $G$ has $r + 1$ automorphism orbits and that $\mathcal{R}$ has $m + 1$ ring-automorphism orbits. Then $G^{\mathcal{R}}$ has at most
\begin{center}
    $1 + m C^r_1 + m^2 C^r_2 + \cdots + m^{r} C^r_r$
\end{center}
automorphism orbits, where $C^r_k = \binom{r}{k}$.
\end{prop}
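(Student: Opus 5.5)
We need to recall the definition of the Boolean power. For a Boolean ring $\mathcal{R}$ (presumably the ring of idempotents / Boolean algebra) and a group $G$, the Boolean power $G^{\mathcal{R}}$ consists of the functions $f$ from the Stone space of $\mathcal{R}$ to $G$, with $G$ discrete, that are continuous, i.e. locally constant. Equivalently, it consists of the finite formal sums $\sum_i g_i e_i$, where $\{e_i\}$ is a partition of $1$ into pairwise orthogonal idempotents of $\mathcal{R}$ and the $g_i \in G$ are distinct. I would use the second description: every element $f\neq 1$ can be written uniquely as
$$f=\sum_{g\in G} g\, e_g ,$$
where $e_g\in\mathcal{R}$, the $e_g$ are pairwise orthogonal, $\sum_g e_g=1$, and only finitely many $e_g$ are nonzero. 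Here $e_g$ is the support of the value $g$.

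\textbf{Two kinds of automorphisms.} Two types of automorphisms of $G^{\mathcal{R}}$ are available.
\begin{itemize}
\item[(a)] A ring automorphism $\phi$ of $\mathcal{R}$ induces $f\mapsto \sum g\,\phi(e_g)$.
\item[(b)] For a partition $1=d_1+\dots+d_k$ and automorphisms $\alpha_1,\dots,\alpha_k\in\Aut(G)$, the map $f\mapsto \sum_j \alpha_j(f)d_j$ is an automorphism. It applies $\alpha_j$ pointwise on the piece $d_j$, and is a homomorphism because the operations are pointwise.
\end{itemize}

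\textbf{Counting orbits.} Let $O_0=\{1\},O_1,\dots,O_r$ be the $\Aut(G)$-orbits of $G$. Given $f$, I first use maps of type (b) to normalize it. On each region $e_g$ I apply some $\alpha$ sending $g$ to a fixed representative $o_i$ of its orbit $O_i$; gluing these is legitimate since the $e_g$ form a partition. The result is $f'=\sum_{i=0}^{r} o_i\,\varepsilon_i$, where $\varepsilon_i=\sum_{g\in O_i} e_g$, with $o_0=1$. So the orbit of $f$ is determined by the ordered partition $(\varepsilon_0,\dots,\varepsilon_r)$ of $1$, up to the diagonal action of $\Aut(\mathcal{R})$. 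It remains to bound the number of such partitions up to ring automorphism.

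The count of $1+\sum_k m^kC^r_k$ suggests the following reading. The $m+1$ orbits of $\mathcal{R}$ consist of $\{0\}$ together with $m$ orbits of nonzero elements (one of them probably $\{1\}$). Choose which $k$ of the indices $1,\dots,r$ have $\varepsilon_i\neq0$; there are $C^r_k$ choices. Each nonzero $\varepsilon_i$ then has one of $m$ possible orbit types, giving $m^k$ choices. The term $1$ accounts for the identity, $k=0$. The count thus assigns the value $\varepsilon_0$ no independent data.

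\textbf{Main obstacle.} The hard step is justifying that the tuple $(\varepsilon_1,\dots,\varepsilon_r)$, with $\varepsilon_0$ determined as the complement, is classified up to the combined action simply by the individual orbit types of its components. In general, the orbit of a tuple under $\Aut(\mathcal{R})$ is not determined by the orbits of its entries. I would attack this using homogeneity of the Boolean algebra. In the relevant cases, such as the countable atomless algebra, which is the key example for $\aleph_0$-categorical Boolean powers, an isomorphism between relative algebras $\mathcal{R}e\cong\mathcal{R}e'$ for disjoint pieces can be patched together piece by piece into a global automorphism. I would therefore prove the following lemma.

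\emph{Lemma.} If $\varepsilon_i$ and $\varepsilon_i'$ lie in the same $\Aut(\mathcal{R})$-orbit for each $i$, and both tuples are partitions of $1$, then there is a single automorphism carrying one tuple to the other.

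The proof would take automorphisms $\phi_i$ with $\phi_i(\varepsilon_i)=\varepsilon_i'$, restrict each to isomorphisms $\mathcal{R}\varepsilon_i\to\mathcal{R}\varepsilon_i'$, and define $\phi(x)=\sum_i\phi_i(x\varepsilon_i)$. This is a ring automorphism because $\mathcal{R}\cong\prod_i\mathcal{R}\varepsilon_i$.

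The complement $\varepsilon_0$ remains a difficulty, since its orbit type must also be matched. To absorb it, I would use further maps of type (b) to reshuffle the pieces, or observe that the partition condition fixes the type of $\varepsilon_0$ in the cases counted. If this cannot be arranged in general, the stated bound can be read with the convention that the orbit type of $\varepsilon_0$ is forced by those of the other pieces.
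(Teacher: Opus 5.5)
Your reduction is the same as the paper's. You normalize with pointwise automorphisms of $G$ (the paper's $f=(f_1^{-1})_{B_1}\cdots(f_n^{-1})_{B_n}$), merge the pieces carrying the same orbit representative, and count $\Aut(\mathcal{R})$-orbits of tuples of pairwise disjoint elements.

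However, the proposal stops exactly at the step that decides the bound, so it is not a proof as it stands. The count $m^k\binom{r}{k}$ requires that the $\Aut(\mathcal{R})$-orbit of a tuple $(\varepsilon_i)_{i\in K}$ of pairwise disjoint nonzero elements be determined by the orbits of its entries.

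Your gluing lemma is correct, but it only shows that the joint orbit is determined by the orbits of the entries \emph{together with} the orbit of $\varepsilon_0$, or equivalently of the union $U=\bigcup_{i\in K}\varepsilon_i$. In the non-unital case there is no $\varepsilon_0$, and the paper's main example $\mathcal{R}_1$ is non-unital. There you use $\mathcal{R}=\{x\in\mathcal{R}: x\subseteq U\}\oplus\{x\in\mathcal{R}: x\cap U=\emptyset\}$, and an automorphism carrying $U$ to $U'$ supplies the isomorphism of the second summands.

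This extra datum is genuinely independent of the entries. In $\mathcal{R}_2$, the pairs $([0,\tfrac{1}{2}),[\tfrac{1}{2},1))$ and $([0,\tfrac{1}{4}),[\tfrac{1}{2},1))$ agree entrywise up to automorphisms. They are not conjugate, because every ring automorphism fixes the unit $[0,1)$. So your argument yields only a bound with an extra factor, namely the number of possible orbits of $U$, for each $k\ge 2$.

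Neither of your proposed repairs removes this factor. Maps of type (b) fix every $\varepsilon_i$, so they cannot change the orbit of $\varepsilon_0$. Reading the bound ``with a convention'' changes the statement rather than proving it.

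The paper does not supply the missing step either. After reaching $(g_{j_1})_{D_1}\cdots(g_{j_l})_{D_l}$, it asserts that a single $\beta\in\Aut(\mathcal{R})$ moves $(D_1,\dots,D_l)$ to representatives depending only on the individual orbits, and counts $m^l\binom{r}{l}$ on that basis. The $\mathcal{R}_2$ example shows this assertion is false in general; the bound survives there only because, for $l\ge 2$, no $D_k$ can equal $[0,1)$.

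What your method does prove rigorously is the case used in the examples. Suppose $\mathcal{R}$ has no unit and $\Aut(\mathcal{R})$ is transitive on nonzero elements ($m=1$, as for $\mathcal{R}_1$ and the ring generated by real half-open intervals). Then the unions $U,U'$ are automatically conjugate. The non-unital gluing then makes any two tuples of pairwise disjoint nonzero elements of the same length conjugate. This gives exactly $1+\sum_{k=1}^{r}\binom{r}{k}=2^r$ orbits of $\mathcal{F}_{\mathcal{R}}\rtimes(\Aut(\mathcal{R}))^\Gamma$, i.e.\ the bounds $4$ and $8$.

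For general $m$, proving the stated bound would need an additional counting argument, for instance one exploiting which orbit patterns are realizable. Neither your proposal nor the paper contains such an argument.
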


Note that the finitely generated group $S_{3} \wr \mathbb{Z}$ is locally graded but not residually finite. In the second main result of this paper, we study finitely generated locally graded groups with finitely many orbits under automorphisms.

\begin{thmB}
Let $d$ and $n$ be positive integers. 
Let $G$ be a $d$-generator locally graded group with $n$ automorphism orbits. Then $G$ is finite. Moreover, if
$\pi$ is a finite set of primes such that $|G|$ is a $\pi$-number, then $|G|$ is $\{d,n,\pi\}$-bounded.
\end{thmB}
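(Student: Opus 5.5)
We need to prove Theorem B: a finitely generated locally graded group $G$ with finitely many automorphism orbits is finite, with a bound on $|G|$ in terms of $d$, $n$ and $\pi$.

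\textbf{Step 1: reduction to the finite residual.} Let $R$ be the intersection of all subgroups of finite index in $G$. Since $G$ is finitely generated, for each $k$ there are only finitely many subgroups of index $k$. So $R$ is also the intersection of the characteristic subgroups of finite index, and $G/R$ is residually finite. I would first check that $G/R$ again has at most $n$ automorphism orbits. The point is that $R$ is characteristic, so every automorphism of $G$ induces one of $G/R$, and the images of the $n$ orbits of $G$ cover $G/R$. Then Theorem A applies: $G/R$ is locally finite. Being finitely generated, $G/R$ is finite.

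\textbf{Step 2: show $R=1$.} Suppose $R\neq 1$. Since $R$ has finite index in a finitely generated group, $R$ is finitely generated. As $G$ is locally graded, $R$ has a proper subgroup of finite index, and hence a proper characteristic subgroup $K$ of finite index in $R$. Then $K$ is characteristic in $G$ and $|G:K|<\infty$. This forces $K\supseteq R$, contradicting $K<R$. Hence $R=1$, $G$ is residually finite, and Theorem A gives that $G$ is finite. (If orbit counting is not needed here, Step 1 already suffices: $G/R$ finite plus $R$ finitely generated gives the contradiction directly.)

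\textbf{Step 3: the bound.} Now $G$ is a finite $d$-generated group with $n$ orbits and $|G|$ a $\pi$-number. Elements in the same orbit have the same order, so $G$ has at most $n$ element orders. Each element order is a $\pi$-number, and its exponent at each prime $p\in\pi$ is at most $n-1$. Indeed, for an element $g$ of order $p^a m$ with $p\nmid m$, the elements $g^{m}, g^{mp},\dots$ have the $a+1$ distinct orders $p^a,p^{a-1},\dots,1$. So $\exp G$ divides $\prod_{p\in\pi}p^{\,n-1}$, which is $\{n,\pi\}$-bounded.

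Then $G$ is a $d$-generated finite group of bounded exponent. By Zelmanov's positive solution of the restricted Burnside problem, $|G|$ is $\{d,\exp G\}$-bounded, hence $\{d,n,\pi\}$-bounded.

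\textbf{Main obstacle.} The step I expect to be hardest is the last one, because it relies on the restricted Burnside problem. I would first try to avoid it with a more elementary argument, for instance by using the orbit structure to bound the Sylow subgroups and the derived length. Failing that, I would simply quote Zelmanov's theorem, which gives the bound immediately.
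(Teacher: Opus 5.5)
Your proof is correct. Step 3 is essentially the paper's argument (bounded exponent plus Zelmanov's theorem). You even justify the exponent bound, showing that $p^a$ dividing an element order forces $a\le n-1$, whereas the paper only asserts $\exp(P_i)\mid p_i^{\,n}$.

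The finiteness part, however, takes a different route. The paper does not use Theorem A there. It assumes $G$ is infinite and takes a proper characteristic subgroup $K_1$ of finite index, which exists because $G$ is finitely generated and locally graded. Then $K_1$ is again infinite, finitely generated and locally graded, so the step can be iterated. This gives a strictly descending chain $G>K_1>K_2>\cdots$ of characteristic subgroups of $G$. Each term is a union of $\Aut(G)$-orbits and contains strictly fewer orbits than the one before, which is impossible with only $n$ orbits.

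You instead isolate the finite residual $R$, get $G/R$ finite from Theorem A, and use local gradedness only once, to show $R=1$. Your decomposition makes explicit that the finite residual of \emph{any} finitely generated group with finitely many automorphism orbits has finite index; local gradedness is needed only to make it trivial. The cost is that Theorem A's local finiteness comes from Zelmanov's theorem. So you invoke the restricted Burnside problem already for the qualitative conclusion, while the paper needs it only for the quantitative bound.

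You could keep your structure and drop Theorem A. Suppose the finitely generated residually finite group $G/R$ were infinite. The characteristic subgroups $M_k$, each the intersection of all subgroups of index at most $k$, would then form a descending chain of unions of orbits with infinitely many distinct terms. That is precisely the paper's chain argument.
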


As usual, the expression ``$\{a,b, \ldots\}$-bounded'' means ``bounded  above by some function which depends only on the parameters $a,b, \ldots$''.

It is well known that a group isomorphic to a finite-dimensional $\mathbb{Q}$-vector space has two orbits under its automorphism group. Such a group is isomorphic to the Mal'cev $\mathbb{Q}$-completion of a free abelian group of finite rank. Thus, in the third main result of this paper we study the Mal'cev $\mathbb{Q}$-completion of the $r$-generated free nilpotent group $N_{r, c}$ of nilpotent class $c$.

\begin{thmC}
The Mal'cev $\mathbb{Q}$-completion of the free nilpotent $r$-generated group $N_{r, c}$ of class $c$ has finitely many orbits under automorphisms if, and only if either $c \leq 2$, or $r = 2$ and $c = 3$. 
\end{thmC}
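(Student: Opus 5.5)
The plan is to pass to Lie algebras. By the Mal'cev correspondence, the Mal'cev $\mathbb{Q}$-completion $G$ of $N_{r,c}$ corresponds, via the Baker--Campbell--Hausdorff formula, to the free nilpotent Lie algebra $L=L_{r,c}$ of rank $r$ and class $c$ over $\mathbb{Q}$. Moreover $\Aut(G)=\Aut(L)$, and the orbits correspond under $\exp$. Write $L=L_1\oplus\cdots\oplus L_c$ for the grading by degree, with $V=L_1$. Every automorphism has the form $x_i\mapsto g(x_i)+\varphi(x_i)$ with $g\in GL(V)$ and $\varphi\in \mathrm{Hom}(V,\gamma_2L)$ arbitrary, because $L$ is free in the variety of class-$c$ nilpotent Lie algebras. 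The terms $\gamma_kL=L_k\oplus\cdots\oplus L_c$ are characteristic. On $L_c$ the part $\varphi$ acts trivially, so $\Aut(L)$ acts on $L_c$ through the natural $GL_r(\mathbb{Q})$-action.

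\emph{The ``if'' direction.} For $c=1$ there are two orbits. For $c=2$, write $u=v+w$ with $v\in V$ and $w\in L_2=\Lambda^2V$. If $v\neq 0$, choose $\varphi$ with $\varphi(v)=-w$ and then use $GL(V)$; this shows that all such $u$ lie in the orbit of $x_1$. If $v=0$, the $GL(V)$-orbits of $w\in\Lambda^2V$ are classified by the rank $2k\le r$ of the alternating form, so there are finitely many. For $(r,c)=(2,3)$ we have $\dim L_2=1$, with $z=[x,y]$, and $\dim L_3=2$. If $v\ne 0$, the same $\varphi$-trick again moves $u$ to $x$. If $u=z+t$ with $t\in L_3$, the inner automorphisms $\exp(\mathrm{ad}(sx+ty))$ send $z\mapsto z+s[x,z]+t[y,z]$, and these exhaust $L_3$; so there is one orbit. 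Finally, $L_3\cong V\otimes\Lambda^2V$ and $GL_2$ is transitive on its nonzero vectors. Hence there are five orbits in all.

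\emph{The ``only if'' direction.} If $\Aut(L)$ has finitely many orbits, then so does $GL_r(\mathbb{Q})$ on $L_c$, which has dimension the Witt number $M_r(c)$. Each orbit is the image of the algebraic group $GL_r$, so it lies in a constructible set of dimension at most $r^2$. The $\mathbb{Q}$-points are Zariski dense in $L_c$, so we need $M_r(c)\le r^2$. Using $M_r(c)=\frac1c\sum_{d\mid c}\mu(d)r^{c/d}$, a direct check eliminates every case except $(r,c)=(3,3)$, where $M=8\le 9$, and $(2,4)$, where $M=3\le 4$. The cases eliminated are $c=3$ with $r\ge4$ (for example $20>16$), $c=4$ with $r\ge3$ ($18>9$), and $c\ge5$ with $r\ge2$ (for example $M_2(5)=6>4$). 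Monotonicity in $r$ and $c$ handles the rest.

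\emph{The two borderline cases.} These are the step I expect to be the real obstacle, and I would settle them with explicit invariants. For $(3,3)$, we have $L_3\cong S_{(2,1)}V\cong \mathfrak{sl}(V)\otimes\det$. Thus $g$ acts as conjugation composed with multiplication by $\det g$. The characteristic-polynomial coefficients $c_2$ and $c_3$ of a trace-free matrix scale by $\lambda^2$ and $\lambda^3$, so $c_2^3/c_3^2$ is an invariant. It takes infinitely many values, which gives infinitely many orbits. For $(2,4)$, we have $L_4\cong S^2V\otimes\Lambda^2V$, i.e.\ binary quadratic forms twisted by $\det$. Under $g$ the discriminant is multiplied by $\det(g)^2\cdot\det(g)^{2}$, a square, so the class of the discriminant in $\mathbb{Q}^*/\mathbb{Q}^{*2}$ is an invariant. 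This group is infinite, so again there are infinitely many orbits. The care needed here is in identifying these $GL_r$-modules correctly, for instance via Hall bases, and in checking the twist exponents.
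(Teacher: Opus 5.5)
Your proof is correct, and it takes a genuinely different route from the paper's.

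\textbf{How the paper argues.} The paper works entirely inside the group. For the positive cases it writes down explicit automorphisms using its extension criterion for $N_{r,c}^{\mathbb{Q}}$: commutator width for $c=2$, and the maps $\varphi_i,\phi_i$ for $(2,3)$. For the negative cases it pigeonholes on primes. Infinitely many central elements, namely $[x_2,x_1,x_1,x_2]^p$ or $[x_2,x_1,x_1]^p[x_3,x_2,x_3]$, must have two in one orbit, and this forces a non-square such as $D^2=q/p$. For $r\ge 3$, $c=3$ this needs a long case analysis of a nonlinear system. Higher $c$ is reached by factoring out the centre.

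\textbf{How you argue.} You pass to the free nilpotent Lie algebra and observe that $\Aut(L)$ acts on $L_c$ only through $GL_r(\mathbb{Q})$. The bound $M_r(c)\le r^2$ then disposes of everything except $(3,3)$ and $(2,4)$ at once. This includes $r\ge 3$, $c\ge 4$, which the paper leaves implicit. Your discriminant square class for $(2,4)$ is exactly the obstruction the paper's prime trick detects. The invariant $e_2^3/e_3^2$ on $\mathfrak{sl}_3\otimes\det$ replaces the paper's most laborious computation with a few lines. Both invariants are unchanged by arbitrary scalar factors, so the twist exponents you worried about play no role.

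\textbf{Trade-off.} Your route costs the Mal'cev correspondence, the $GL_r$-module structure of free Lie algebras, and a Zariski-density argument. The paper's route is elementary, but its coordinate computations are fragile. For instance, the image of $[x_2,x_1,x_1,x_2]$ in $Z(N_{2,4}^{\mathbb{Q}})$ actually has middle coefficient $(ad+bc)D$, not $D^2$, although the paper's conclusion survives.

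\textbf{Two small repairs.} First, for $(2,3)$ your own analysis gives four orbits, not five. They are $\{0\}$, the elements with nonzero linear part, $\gamma_2L\setminus\gamma_3L$, and $\gamma_3L\setminus\{0\}$, matching the paper's count of $4$. You should also say explicitly that $az+t$ with $a\neq 0$ is first normalized to $a=1$ by some $g$ with $\det g=a^{-1}$.

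Second, replace the appeal to ``monotonicity in $r$ and $c$'' with the observation that $L_{r,c}/\gamma_{k+1}L_{r,c}\cong L_{r,k}$ is a characteristic quotient. So finitely many orbits descend to smaller class, which is the reduction the paper itself uses for $r=2$. You then need the Witt count only for $c=3$, where $(r^3-r)/3>r^2$ for all $r\ge 4$. Every other excluded case reduces to $(r,3)$ with $r\ge 3$ or to $(2,4)$.
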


\section{Residually Finite Groups with finitely many orbits under automorphisms}

\subsection{The proof of Theorem A}

\begin{thmA}
Let $G$ be a residually finite group with finitely many orbits under automorphisms. Then $G$ has finite exponent. Moreover, $G$ is locally finite. 
\end{thmA}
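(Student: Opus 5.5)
We will first show that $G$ is periodic, then that orders are bounded, and finally invoke Zelmanov's solution of the restricted Burnside problem to get local finiteness. The basic observation is that $\Aut(G)$-orbits preserve every group-theoretic property of an element; in particular they preserve its order and the property of being a $k$-th power, for each $k$.

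\emph{Step 1: $G$ is periodic.} Suppose $x\in G$ has infinite order, and consider the elements $x^{i!}$, $i\ge 1$. There are only finitely many orbits, so infinitely many of these lie in a single orbit, say $x^{i_1!}, x^{i_2!},\dots$ with $i_1<i_2<\cdots$. Put $y=x^{i_1!}\neq 1$. For each $j$ there is $\varphi_j\in\Aut(G)$ with $\varphi_j(x^{i_j!})=y$. Hence
$$y=\varphi_j(x)^{\,i_j!}=\bigl(\varphi_j(x)^{\,i_j!/i_1!}\bigr)^{i_1!}.$$
So $y$ is a $(i_j!)$-th power. Since $i_j!$ is divisible by every integer $k\le i_j$ and $i_j\to\infty$, $y$ is a $k$-th power for every $k\ge1$. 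By residual finiteness, choose a normal subgroup $N$ of finite index with $y\notin N$, and let $e=|G/N|$. Writing $y=w^{e}$ gives $yN=(wN)^e=N$, a contradiction. Thus every element of $G$ has finite order.

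\emph{Step 2: finite exponent.} Order is constant on automorphism orbits, and there are finitely many orbits. Hence the set of element orders is finite, and its lcm is an exponent for $G$.

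\emph{Step 3: local finiteness.} Let $H$ be a finitely generated subgroup of $G$. Then $H$ is residually finite, being a subgroup of a residually finite group, and it has finite exponent $e$. Every finite quotient of $H$ is a $d$-generator group of exponent dividing $e$, so by Zelmanov's positive solution of the restricted Burnside problem its order is bounded by a constant depending only on $d$ and $e$. Residual finiteness then forces $H$ to be finite: the intersection of the finite-index normal subgroups is trivial, and there are only finitely many normal subgroups of index at most the bound. Hence $G$ is locally finite.

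The only delicate point is Step 1: producing a \emph{single} nontrivial element that is a $k$-th power for every $k$ at once. The factorial subsequence trick above is what handles this. Steps 2 and 3 are routine once Zelmanov's theorem is invoked.
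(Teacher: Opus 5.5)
Your proof is correct. Steps 2 and 3 agree with the paper's. In Step 3 you also spell out the standard deduction the paper leaves implicit: a finitely generated residually finite group of exponent $e$ is finite, because its finite quotients have order bounded by Zelmanov's bound.

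The genuine difference is in Step 1.

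\emph{The paper's route.} It works with subgroups. Starting from $g$ of infinite order, it applies residual finiteness repeatedly: first to $G$, then to the finite-index subgroup $N_1$, then to $N_2$, and so on. This produces an infinite strictly descending chain $G > G^{n_1} > (G^{n_1})^{n_2} > \cdots$ of power subgroups, each characteristic in $G$. It then notes that every characteristic subgroup is a union of $\Aut(G)$-orbits, so there are at most $2^{\omega(G)}$ of them.

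\emph{Your route.} You work with elements. Pigeonholing the orbits of the $x^{i!}$ gives a single nontrivial $y$ that is a $k$-th power for every $k$. One application of residual finiteness, together with Lagrange in $G/N$, then forces $y=1$.

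\emph{What each buys.} Your argument is shorter. It isolates a reusable fact: in a residually finite group, no nontrivial element is a $k$-th power for all $k$. It needs only that the powers of each element meet finitely many orbits. The paper's argument uses the orbit hypothesis only through the finiteness of the lattice of characteristic subgroups. So it actually proves periodicity for residually finite groups with the descending chain condition on characteristic subgroups. The two arguments weaken the hypothesis in incomparable directions.
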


\begin{proof}
Firstly, we prove that $G$ is a torsion group. Suppose there exists $g$ in $G$ with infinite order. Since $G$ is residually finite, there exists a normal subgroup $N_{1}$ of $G$ such that $g \notin N_{1}$ and the index $[G : N_{1}] = n_{1}$ is finite. Thus 
$$1 < \langle g^{n_1} \rangle  \leq G^{n_{1}} \leq N_{1} < G.$$ 
Since $N_{1}$ is residually finite, there exists a normal subgroup $N_{2}$ of $N_{1}$ such that $g^{n_1} \notin N_{2}$ and the index $[N_{1} : N_{2}] = n_{2}$ is finite. Thus 
$$1 < \langle g^{n_1 n_{2}} \rangle  \leq (G^{n_{1}})^{n_2} \leq N_{2} < N_{1} < G.$$ 
Note that $(G^{n_{1}})^{n_2}$ is a proper subgroup of $G^{n_{1}}$. Continuing with this procedure we obtain an infinite chain
$$\dots < ( \dots ((G^{n_1})^{n_2}) \dots)^{n_s} < \dots < (G^{n_{1}})^{n_2} < G^{n_{1}} < G$$
of nontrivial proper characteristc subgroups of $G$, and since the group $G$ has finitely many orbits under automorfisms we have a contradiction.

Now, let $g_{1}, \dots, g_{n}$ be elements of $G$ such that $G = \bigcup_{i = 1}^{n} g_{i}^{Aut(G)}$. Then $|g| \in \{|g_{1}|, \dots, |g_{n}|\}$ for all $g \in G$. By Zelmanov's solution of the Restricted Burnside Problem, the group $G$ is locally finite, which completes the proof. 
\end{proof}

A group $G$ is said to be an $S$-space, for some finite non-abelian simple group $S$, if each finite subset of $G$ is contained in a subgroup of $G$ isomorphic to $S^n$, for some $n \geq 0$.

\begin{lem}(\cite[1983]{N_0}) \label{Wilson}
Suppose that $G$ is a characteristically simple non-abelian group containing a proper subgroup of finite index. Then $G$ is an $S$-space for
some finite simple non-abelian group $S$.
\end{lem}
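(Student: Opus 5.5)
This is Wilson's result (attributed to Apps/Wilson 1983). I would prove it in three steps: produce a nontrivial finite characteristically simple quotient of finite-index subgroups, transport this into a finite-index \emph{characteristic} subgroup, and then use the local-finiteness forced by characteristic simplicity to see that the group is locally a product of copies of a single simple group $S$.

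\textbf{Step 1 (finite residual).} Let $H<G$ be proper of finite index. Its normal core $N=\bigcap_{g}H^g$ is a proper normal subgroup of finite index. Let $R$ be the intersection of all normal subgroups of finite index in $G$ (the finite residual). Every automorphism permutes the normal subgroups of index $k$, so $R$ is characteristic. Since $R\le N<G$, characteristic simplicity gives $R=1$. Hence $G$ is residually finite.

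\textbf{Step 2 (local structure).} Let $F\subseteq G$ be finite and let $K=\langle F\rangle$.
\begin{itemize}
\item Choose finitely many normal subgroups of finite index separating the elements of $F$. Then $F$ injects into a finite quotient $G/M$.
\item To reach copies of a single simple group, use that $G$ is non-abelian and characteristically simple, so $G=[G,G]$. Moreover, for every finite quotient $G/M$ and every chief factor $A$ of it, the subgroup $\bigcap_{\alpha\in\Aut(G)}M^{\alpha}$ is characteristic, hence trivial. This lets us compare all finite quotients.
\item The key claim is that every finite quotient of $G$ has only non-abelian chief factors, all isomorphic to one fixed $S$. For abelianness: the intersection of all $M$ with $G/M$ having an abelian chief factor at the top is characteristic. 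Set $M_0=\bigcap\{M: G/M \text{ soluble, finite}\}$. This is characteristic, and it equals $G$ because $G=G'$ and a perfect group has no nontrivial soluble quotient. Handling deeper layers requires iterating with $G^{(k)}$-type verbal subgroups $\bigcap\{M: G/M\in\mathfrak{X}\}$ for formations $\mathfrak{X}$. These are characteristic, hence equal to $1$ or $G$. Taking $\mathfrak{X}$ to be the groups all of whose composition factors lie in a set $\Sigma$ of simple groups, and applying this to $\Sigma=\{S\}$ for a composition factor $S$ of some finite quotient, we get that the residual is not $G$, hence is $1$. So $G$ is residually a finite group whose composition factors are all isomorphic to $S$.
\end{itemize}

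\textbf{Step 3 (embedding into $S^n$).} We need $K$ to sit inside a subgroup isomorphic to $S^n$. Consider the set $\mathcal{M}$ of normal subgroups $M$ of finite index with $G/M\cong S^k$. The intersection of all members of $\mathcal{M}$ is characteristic. It is not $G$: the last term of a chief series of some finite quotient gives $G/M\cong S^k$, because chief factors that are minimal normal are direct powers of simple groups, and a top chief factor of a perfect group with all composition factors $S$ is $S^k$. So this intersection is $1$, and $F$ embeds in some $G/M\cong S^{k}$ via finitely many such $M$, since $S^{k_1}\times S^{k_2}$ has normal subquotient images equal to $S^j$.

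This only gives that $K$ maps injectively into $S^k$, not that $K$ lies \emph{inside} a subgroup of $G$ isomorphic to $S^n$. This is the main obstacle. To lift, I would use local finiteness: $G$ is residually finite and characteristically simple, and I would try to show every finitely generated subgroup is finite by combining Step 2 with the fact that $S$-residual groups of bounded composition type satisfy a law ($S$ has finite exponent, and Zelmanov applies as in Theorem A). Then $K$ is a finite subgroup. I would then pick a finite subgroup $L\supseteq K$ and use conjugates of $L$ under $\Aut(G)$, together with the minimal-normal structure, to generate a subgroup $L_1\cong S^n$ containing $K$.

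For this last construction I would follow Wilson's argument: in $S$-residual locally finite perfect groups, the subgroup generated by $K$ and sufficiently many components can be made a direct product. I expect this step to need the most care.
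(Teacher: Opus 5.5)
The paper gives no proof of this lemma; it is quoted from Apps, who attributes it to Wilson. Your proposal therefore has to stand on its own, and, as you note yourself, it stops exactly where the content of the theorem begins.

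\textbf{The main gap.} Everything you actually establish concerns \emph{quotients} of $G$:
\begin{itemize}
\item $G$ is residually finite and $G=G'$;
\item there is a quotient $G/M\cong S$ whose $\Aut(G)$-images intersect trivially, so every finite $F\subseteq G$ injects into some quotient $G/N\cong S^k$;
\item $G$ is locally finite.
\end{itemize}
The conclusion, however, concerns \emph{subgroups}: you need $F\subseteq H\le G$ with $H\cong S^n$. Nothing you prove yields this. For example, a finite $T\supseteq F$ with $TN=G$ satisfies $T/(T\cap N)\cong S^k$, but there is no reason for $T\cap N$ to have a complement in $T$ containing $F$.

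In fact your intermediate properties cannot suffice. Let $G_0\le A_6^{\mathbb N}$ be the group of sequences that are eventually constant with value in $A_5$, i.e.\ $G_0=A_6^{(\mathbb N)}\rtimes A_5$ with $A_5$ acting diagonally.
\begin{itemize}
\item $G_0$ is perfect, locally finite and residually finite.
\item Every finite subset injects into $A_6^k$ under projection onto finitely many coordinates, and these projections are onto.
\item Yet $G_0$ is an $S$-space for no $S$. Take $F$ to consist of generators of the first-coordinate copy of $A_6$ together with an element of non-trivial tail, and suppose $F\subseteq H\cong S^n$. The image of $H$ in the tail quotient $G_0/A_6^{(\mathbb N)}\cong A_5$ is a non-trivial $S^j$, which forces $S\cong A_5$. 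But $A_6$ does not embed in $A_5^n$.
\end{itemize}
So characteristic simplicity must be used a second time, in an essential way, to manufacture direct powers of $S$ \emph{inside} $G$; this is the real substance of Wilson's theorem.

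Your final paragraph only names this step. ``Use conjugates of $L$ under $\Aut(G)$'' supplies no mechanism: you know nothing about $\Aut(G)$ beyond the absence of invariant proper non-trivial subgroups. Moreover, a subgroup generated by a finite subgroup and some of its automorphic images has no reason to be a direct power of $S$.

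\textbf{Smaller points.}
\begin{itemize}
\item In Step 2, $S$ must be chosen as a simple quotient $G/M$, which is non-abelian because $G$ is perfect. An arbitrary composition factor of a finite quotient will not do, since the $\{S\}$-residual can then be all of $G$.
\item Your ``key claim'' that every finite quotient of $G$ has all chief factors isomorphic to $S$ does not follow from residual arguments; $G_0$ above has the finite quotient $A_5$. Fortunately you never use it.
\item Zelmanov is unnecessary for local finiteness. $G$ embeds in the Cartesian power $\prod_{\alpha\in\Aut(G)}G/M^{\alpha}$ of copies of $S$. Hence $G$ lies in the variety generated by the finite group $S$, which is locally finite by an elementary argument.
\end{itemize}
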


\begin{prop} \label{2.2}
    Let $G$ be a group and let $G_0 = 1$, $G_1$, $G_2$, ..., $G_{n-1}$, $G_{n} = G$ be characteristc  subgroups of $G$ such that
    $$1 = G_0 < G_{1} < G_{2} < \dots G_{n-1} < G_{n} = G$$
    and $G_{i + 1}/G_{i}$ is characteristically simple and residually finite for all $i = 0, 1, \dots, n-1$. If $\omega(G) \leq 7$, then $G$ is a soluble-by-finite group.
\end{prop}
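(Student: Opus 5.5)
We argue by contradiction, taking $\omega(G)$ to be the number of $\mathrm{Aut}(G)$-orbits on $G$. Suppose $G$ is not soluble-by-finite. Each factor $G_{i+1}/G_i$ is characteristically simple and residually finite, so it is either abelian (hence elementary abelian or torsion-free divisible; by residual finiteness, in effect elementary abelian $p$-groups or trivial) or non-abelian. If a non-abelian factor $G_{i+1}/G_i$ is infinite, it is residually finite and therefore has a proper subgroup of finite index. Lemma \ref{Wilson} then makes it an $S$-space for some finite non-abelian simple $S$. If every non-abelian factor were finite, $G$ would be soluble-by-finite: the abelian factors give a soluble series, and the finite non-abelian factors can be absorbed by passing to a characteristic subgroup of finite index (the kernel of the action on the finite sections together with the soluble parts). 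So we may fix an index $i$ with $Q:=G_{i+1}/G_i$ an infinite $S$-space.

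The main step is to count orbits. Since the $G_j$ are characteristic, every automorphism of $G$ preserves each $G_j$, and elements in distinct layers $G_{j+1}\setminus G_j$ lie in distinct orbits. Moreover, within the layer $G_{i+1}\setminus G_i$, two elements in the same $\mathrm{Aut}(G)$-orbit have images in $Q$ that lie in the same orbit under the image of $\mathrm{Aut}(G)\to\mathrm{Aut}(Q)$. Hence
\[
\omega(G)\ \ge\ 1+\sum_{j}\bigl(\text{number of nontrivial orbits of }\mathrm{Aut}(G)\text{ on } G_{j+1}/G_j\bigr)\ \ge\ \omega(Q).
\]
It therefore suffices to show that an infinite $S$-space has at least $8$ automorphism orbits, because its orbit count bounds that of $\mathrm{Aut}(G)$ acting on it.

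To establish this bound, choose any finite subset of $Q$ and place it in a subgroup $S^n\le Q$ with $n$ large; this is possible because $Q$ is infinite. Automorphisms preserve element orders and the orders of the "coordinate-type" invariants. The invariant I would use is the conjugacy class of the element: elements of $S$ of distinct orders give distinct orbits, and products of elements of orders $a$ and $b$ in different coordinates produce order $\mathrm{lcm}(a,b)$. Every nonabelian simple $S$ has elements of at least three distinct prime orders $p,q,r$ (Burnside's $p^aq^b$ theorem), and it also has elements of order $4$ or of some composite order, or at least it is possible to combine primes. Inside $S\times S\times S\le Q$, the elements with coordinate orders chosen from $\{1,p,q,r\}$ realise all the orders $1,p,q,r,pq,pr,qr,pqr$, which are $8$ distinct element orders. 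Automorphisms preserve order, so $\omega(Q)\ge 8>7$, giving a contradiction. For this I need $Q\supseteq S^3$, which holds because $Q$ is infinite and therefore contains $S^n$ for arbitrarily large $n$.

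The main obstacle is the reduction in the first paragraph, namely showing that if all non-abelian factors are finite then $G$ is soluble-by-finite. I would handle it by induction on $n$. The centraliser $C$ of a finite characteristic section is characteristic of finite index, and intersecting these centralisers yields a characteristic finite-index subgroup whose induced series has only abelian factors together with the central intersections of finite simple factors, which are trivial. That subgroup is then soluble.
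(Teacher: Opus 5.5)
Your proof is correct and its core is the paper's: Lemma~\ref{Wilson} turns an infinite non-abelian factor into an $S$-space containing $S\times S\times S$, and the element orders $1,p,q,r,pq,pr,qr,pqr$ there give at least $8$ orbits. The paper only says ``we can suppose that $G$ is a non-abelian characteristically simple group,'' so two parts of your argument supply the reduction it leaves implicit: the centraliser argument for the case where all non-abelian factors are finite, and the inequality $\omega(G)\ge\omega(G_{i+1}/G_i)$. Your asides about conjugacy classes and about elements of order $4$ or of composite order are not needed, and the latter fails for $A_5$.
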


\begin{proof}
    We can suppose that $G$ is a non-abelian characterstically simple group. By Lemma \ref{Wilson}, $G$ is an infinite $S$-space, where $S$ is a non-abelian finite simple group. Then there is a finite subgroup $H$ of $G$ isomorphic to $S \times S \times S$. There exist three distinct primes $p$, $q$, and $r$ such that $\{1, p, q, r, pq, pr, qr, pqr\} \subset spec(G)$, thus $|spec(G)| \geq |spec(H)| \geq 8$, and $\omega(G) \geq 8$, a contradiction. 
\end{proof}

In the next section, we present examples of residually finite characteristically simple non-abelian groups with finitely many orbits under automorphisms; among these examples, there is one with $8$ orbits.

\subsection{Examples}

Before presenting the examples, we introduce the concept of a Boolean power. For further details, see \cite{BP}. A \emph{Boolean ring} $R$ is a commutative ring in which every element satisfies the identity
$x^2 = x \cdot x = x, \quad \text{for all } x \in R$.
In a Boolean ring $R$, we have $x + x = 0$ for every $x \in R$. Indeed,
$2x = (2x)^2 = (2x)(2x) = 4x^2 = 4x$,
and hence $x + x = 2x = 0$.

Let $G$ be a group, let $X$ be a nonempty set, and let $\mathcal{P}(X)$ be the power set of $X$. The set $\mathcal{P}(X)$ equipped with the operations symmetric union $A \triangle B = (A \setminus B) \cup (B \setminus A)$ and intersection $A \cap B$, for all $A$ and $B$, is a Boolean ring. Let $\mathcal{R}$ be a subring of the Boolean ring $\mathcal{P}(X)$. Consider the group $\prod_X G$. For $g \in G$ and $A \in \mathcal{R}$, define $
g_A \in \prod_X G$ by
\[ g_A = (g_x)_{x \in X}, \text{ where }
g_x =
\begin{cases}
g, & \text{if } x \in A, \\
e, & \text{otherwise}.
\end{cases}
\]
Under these conditions, we define the \textbf{Boolean power} of $G$ by $\mathcal{R}$, denoted by $G^{\mathcal{R}}$, 
 as the subgroup of $\prod_X G$ generated by all elements of the form 
$g_A$, with $g \in G$ and $A \in \mathcal{R}$; that is,
\[
G^{\mathcal{R}} = \langle g_A \mid g \in G,\; A \in \mathcal{R} \rangle.
\]
It follows that each $\gamma \in G^\mathcal{R}$ can be written uniquely (up to the order of the factors) in the form
$$(g_1)_{A_1} \cdots (g_n)_{A_n},$$
where the elements $g_i$ are distinct non-trivial elements of $G$, and the sets $A_i$ are pairwise disjoint non-empty subsets of $X$ that are elements of $\mathcal{R}$. It is worth noting that if $X$ is any set and $\mathcal{R}$ is the subring of finite subsets of $X$, then $G^\mathcal{R}$ is the direct product of $|X|$ copies of $G$. Thus, Boolean powers extend the concept of the direct product of a group $G$.

For our purposes, we now describe some automorphisms of a Boolean power $\Gamma = G^{\mathcal{R}}$. 

\begin{itemize}
    \item[1.] Let $\alpha$ be an automorphism of the ring $\mathcal{R}$. Then $\alpha$ extends to an automorphism $\alpha^\Gamma$ of $\Gamma$ by defining $\alpha^\Gamma$ on the generators of $\Gamma$ by
    \[
    (g_A)^{\alpha^\Gamma} = g_{A^\alpha} \quad \text{for all } g \in G, \, A \in \mathcal{R}.
    \]
    \noindent We write $(\operatorname{Aut}(\mathcal{R}))^{\Gamma} = \langle \alpha^\Gamma \mid \alpha \in \operatorname{Aut}(\mathcal{R}) \rangle$ for the subgroup of $\operatorname{Aut}(\Gamma)$ isomorphic to $\operatorname{Aut}(\mathcal{R})$;
    
    \item[2.] Let $f$ be an automorphism of $G$. Then $f$ extends to an automorphism $f_X$ of $\Gamma$ by defining $f_X$ on the generators of $\Gamma$ by
    $$
    (g_A)^{f_X} = (g^f)_A \quad \text{for all } g \in G \text{ and for all } A \in \mathcal{R};
    $$
    
    \item[3.] Given $A \in \mathcal{R}$ and $f \in \operatorname{Aut}(G)$, define $f_A$ to be the automorphism of $\Gamma$ given on the generators of $\Gamma$ by
    $$
    (g_B)^{f_A} = (g^f)_{A \cap B} (g)_{B \setminus A}, 
    \quad \text{for all } g \in G \text{ and for all } B \in \mathcal{R}.
    $$
    
    \noindent We write $\mathcal{F}_{\mathcal{R}} = \langle f_X, f_A \mid A \in \mathcal{R} \rangle \leqslant \operatorname{Aut}(\Gamma)$.
\end{itemize}

\begin{prop}(\cite[Corollary C1]{BP})
The group $(\operatorname{Aut}(\mathcal{R}))^\Gamma$ normalizes $\mathcal{F}_{\mathcal{R}}$. In particular, the product 
$\mathcal{F}_{\mathcal{R}} (\operatorname{Aut}(\mathcal{R}))^\Gamma$ is a subgroup of $\operatorname{Aut}(\Gamma)$ 
isomorphic to $\mathcal{F}_{\mathcal{R}} \rtimes (\operatorname{Aut}(\mathcal{R}))^\Gamma$.
\end{prop}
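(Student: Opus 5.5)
We need to prove two things: that $(\operatorname{Aut}(\mathcal{R}))^\Gamma$ normalizes $\mathcal{F}_{\mathcal{R}}$, and that the product is a semidirect product. The plan is to check both directly on the generators $g_B$ of $\Gamma$. Throughout we use the unique normal form $(g_1)_{A_1}\cdots(g_n)_{A_n}$ with pairwise disjoint $A_i \in \mathcal{R}$. It lets us write every element as a function $X\to G$ that is constant on the cells of a finite $\mathcal{R}$-partition, and makes $\alpha^\Gamma$, $f_X$, $f_A$ visibly well defined.

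\textbf{Normalization.} For $\alpha\in\operatorname{Aut}(\mathcal{R})$, $f\in\operatorname{Aut}(G)$ and $A\in\mathcal{R}$, we compute $(\alpha^\Gamma)^{-1} f_A\, \alpha^\Gamma$ on a generator $g_B$. Since $\alpha^{-1}$ is a ring automorphism it preserves $\cap$ and $\setminus$. Hence
$$g_B \mapsto g_{B^{\alpha^{-1}}} \mapsto (g^f)_{A\cap B^{\alpha^{-1}}}\, g_{B^{\alpha^{-1}}\setminus A} \mapsto (g^f)_{A^\alpha\cap B}\, g_{B\setminus A^\alpha},$$
which is exactly $(g_B)^{f_{A^\alpha}}$. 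Agreement on generators gives $(\alpha^\Gamma)^{-1} f_A\,\alpha^\Gamma = f_{A^\alpha}\in\mathcal{F}_{\mathcal{R}}$. The same computation gives $(\alpha^\Gamma)^{-1} f_X\,\alpha^\Gamma = f_X$, because $f_X$ commutes with relabelling of sets. If $X\notin\mathcal{R}$, we should note that $f_X$ is still defined on generators and the identity still holds. So conjugation maps the generators of $\mathcal{F}_{\mathcal{R}}$ into $\mathcal{F}_{\mathcal{R}}$, and the same holds for $\alpha^{-1}$. Therefore $\mathcal{F}_{\mathcal{R}}$ is normalized, and the product $\mathcal{F}_{\mathcal{R}}(\operatorname{Aut}(\mathcal{R}))^\Gamma$ is a subgroup.

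\textbf{Trivial intersection.} We need $\mathcal{F}_{\mathcal{R}}\cap(\operatorname{Aut}(\mathcal{R}))^\Gamma=1$, and also that $\alpha\mapsto\alpha^\Gamma$ is injective. I expect this to be the main obstacle. The key observation is that every generator $f_X$ or $f_A$ is ``pointwise''. For each $x\in X$ there is $\phi_x\in\operatorname{Aut}(G)$ (namely $f$ or $1$) with $(\gamma)^{\phi}(x)=\phi_x(\gamma(x))$ for all $\gamma\in\Gamma$. Compositions of pointwise maps are pointwise, so every $\phi\in\mathcal{F}_{\mathcal{R}}$ preserves supports: $\operatorname{supp}(\gamma^\phi)=\operatorname{supp}(\gamma)$.

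On the other hand, $\alpha^\Gamma$ maps $g_B$ (with $g\neq 1$) to $g_{B^\alpha}$, which has support $B^\alpha$. If $\alpha^\Gamma\in\mathcal{F}_{\mathcal{R}}$, then $B^\alpha=B$ for all $B\in\mathcal{R}$, so $\alpha=1$. The same support argument, applied to $\alpha^\Gamma=1$, gives injectivity of $\alpha\mapsto\alpha^\Gamma$. This requires $G\neq 1$, which we assume tacitly.

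Putting these together, the product is the internal semidirect product $\mathcal{F}_{\mathcal{R}}\rtimes(\operatorname{Aut}(\mathcal{R}))^\Gamma$, and $(\operatorname{Aut}(\mathcal{R}))^\Gamma\cong\operatorname{Aut}(\mathcal{R})$.
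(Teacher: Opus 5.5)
Your proof is correct, and its normalization step is exactly the paper's computation: you conjugate $f_A$ and $f_X$ by $\alpha^\Gamma$ on the generators $g_B$ and obtain $f_{A^\alpha}$ and $f_X$. The paper then simply asserts the semidirect product structure without checking it, so your support-preservation argument supplies a step the paper leaves implicit: it shows $\mathcal{F}_{\mathcal{R}}\cap(\operatorname{Aut}(\mathcal{R}))^\Gamma=1$ and that $\alpha\mapsto\alpha^\Gamma$ is injective, assuming $G\neq 1$.
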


\begin{proof}
Indeed, let $\alpha^\Gamma \in (\operatorname{Aut}(\mathcal{R}))^\Gamma$ and let $f_A, f_X$ be generators of $\mathcal{F}_{\mathcal{R}}$. Then
\[
(\alpha^\Gamma)^{-1} f_A (\alpha^\Gamma) = f_{A^\alpha}
\quad \text{and} \quad
(\alpha^\Gamma)^{-1} f_X (\alpha^\Gamma) = f_X \in \mathcal{F}_{\mathcal{R}},
\]
since for every generator $g_B$ of $\Gamma$ we have
\[
g_B \overset{(\alpha^\Gamma)^{-1}}{\longmapsto} g_{B^{\alpha^{-1}}}
\overset{f_A}{\longmapsto} (g^f)_{B^{\alpha^{-1}} \cap A} \, g_{B^{\alpha^{-1}} \setminus A}
\overset{\alpha^\Gamma}{\longmapsto} (g^f)_{B \cap A^\alpha} \, g_{B \setminus A^\alpha}
= (g_B)^{f_{A^\alpha}},
\]
and
\[
g_B \overset{(\alpha^\Gamma)^{-1}}{\longmapsto} g_{B^{\alpha^{-1}}}
\overset{f_X}{\longmapsto} (g^f)_{B^{\alpha^{-1}}}
\overset{\alpha^\Gamma}{\longmapsto} (g^f)_B.
\]
Thus,
\[
\mathcal{A} = \mathcal{F}_{\mathcal{R}} (\operatorname{Aut}(\mathcal{R}))^\Gamma
\simeq \mathcal{F}_{\mathcal{R}} \rtimes (\operatorname{Aut}(\mathcal{R}))^\Gamma.
\]
\end{proof}

\begin{prop}  (\cite[Corollary C1]{BP}) \label{prop3.2}
Let $\Gamma = G^\mathcal{R}$ be a Boolean power of $G$, where $G$ is a finite non-abelian indecomposable group such that $\operatorname{Hom}(G, Z(G)) = 1$. Then $\operatorname{Aut}(\Gamma)$ is isomorphic to $\mathcal{F}_{\mathcal{R}} \rtimes (\operatorname{Aut}(\mathcal{R}))^\Gamma$.
\end{prop}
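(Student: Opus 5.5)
We have already shown that $\mathcal{A}=\mathcal{F}_{\mathcal{R}}(\operatorname{Aut}(\mathcal{R}))^\Gamma \cong \mathcal{F}_{\mathcal{R}} \rtimes (\operatorname{Aut}(\mathcal{R}))^\Gamma$ is a subgroup of $\operatorname{Aut}(\Gamma)$. It therefore suffices to prove the reverse inclusion $\operatorname{Aut}(\Gamma)\subseteq \mathcal{A}$. The plan is to recover the "coordinate structure" of $\Gamma$ intrinsically. To each $\varphi\in\operatorname{Aut}(\Gamma)$ we will attach a ring automorphism $\alpha$ of $\mathcal{R}$, together with a locally constant family of automorphisms of $G$. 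These will exhibit $\varphi$ as a product of some $\alpha^\Gamma$ with elements of $\mathcal{F}_{\mathcal{R}}$.

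\textbf{Step 1: identify the subgroups $G_A$ intrinsically.} For $A\in\mathcal{R}$ put $G_A=\{g_A : g\in G\}$ and $\Gamma_A=\{\gamma\in\Gamma : \operatorname{supp}\gamma\subseteq A\}$. Then $\Gamma=\Gamma_A\times\Gamma_{X\setminus A}$ whenever $X\setminus A$ makes sense in $\mathcal{R}$, or more generally $\Gamma_B=\Gamma_A\times\Gamma_{B\setminus A}$ for $A\subseteq B$. We aim to show that the subgroups $\Gamma_A$ are exactly the direct factors of $\Gamma$ of a suitable form. Specifically, using that $G$ is indecomposable, non-abelian and satisfies $\operatorname{Hom}(G,Z(G))=1$, we will show that every decomposition $\Gamma = H\times K$ has the form $H=\Gamma_A$, $K=\Gamma_{X\setminus A}$.

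The hypothesis $\operatorname{Hom}(G,Z(G))=1$ is exactly what is needed to rule out central twisting of the factors. Indeed, by a Krull--Remak--Schmidt style argument, the direct factors of $\Gamma$ are determined, not merely up to isomorphism, when there are no nontrivial homomorphisms into the centre. Since $\Gamma$ is locally finite (each finite subset lies in a finite product $G^k$ restricted to a finite subalgebra), we will carry this out on the finite subgroups $\Gamma_{\mathcal{B}}\cong G^{k}$ for finite subrings $\mathcal{B}\subseteq\mathcal{R}$. In that setting Remak's uniqueness theorem applies because $G$ is indecomposable and $\operatorname{Hom}(G,Z(G))=1$. We then pass to the direct limit.

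\textbf{Step 2: build $\alpha$.} Since $\varphi$ permutes direct decompositions, Step 1 gives $\Gamma_A^\varphi=\Gamma_{A^\alpha}$ for a bijection $\alpha$ of $\mathcal{R}$. This $\alpha$ preserves intersections, since $\Gamma_{A\cap B}=\Gamma_A\cap\Gamma_B$. It also preserves the disjoint unions that give complements. Hence $\alpha$ preserves $\cap$ and $\triangle$, so $\alpha\in\operatorname{Aut}(\mathcal{R})$. Replacing $\varphi$ by $\varphi(\alpha^\Gamma)^{-1}$, we may assume $\Gamma_A^\varphi=\Gamma_A$ for all $A$.

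\textbf{Step 3: $\varphi$ lies in $\mathcal{F}_{\mathcal{R}}$.} For every atom-free finite partition $X=A_1\sqcup\dots\sqcup A_k$ from $\mathcal{R}$, $\varphi$ now stabilizes each $\Gamma_{A_i}$. Consider the diagonal copy $G_{A_i}$. Here Remak uniqueness again ensures that the image of $G_{A_i}$ under $\varphi$ is the graph of an automorphism; compatibility under refinement of partitions forces it to be of the form $g_{A_i}\mapsto (g^{f_i})_{A_i}$ after refining. The resulting assignment $x\mapsto f(x)\in\operatorname{Aut}(G)$ takes finitely many values on clopen sets $A\in\mathcal{R}$, since $G$ is finite. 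It can therefore be written as a product of generators $f_A$ and $f_X$, so $\varphi\in\mathcal{F}_{\mathcal{R}}$.

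The main obstacle is Step 3, together with the last part of Step 1. One must show that $\varphi$ restricted to $G_A$ is "locally constant" rather than mixing coordinates in a non-uniform way. This will use the finiteness of $G$: the images of $G_A$ decompose over a finite refinement of $A$. I would try first to prove it on finite subrings and then check compatibility, since $\mathcal{R}$ is the union of its finite subrings.
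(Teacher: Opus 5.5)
\paragraph{Step 3 fails when $\mathcal{R}$ has no largest element.} The decisive gap is the last sentence of Step 3. It rests on an assumption you never state: $X\in\mathcal{R}$, used throughout (you write $\Gamma_{X\setminus A}$ and partitions $X=A_1\sqcup\dots\sqcup A_k$ by members of $\mathcal{R}$). A finite product of generators $f_A$, $f_X$ acts coordinatewise through a map $\theta\colon X\to\Aut(G)$ that is constant outside the single member $\bigcup_i A_i$ of $\mathcal{R}$. Finiteness of $G$ gives less: the $\theta$ attached to $\varphi$ is only constant on the blocks of a finite $\mathcal{R}$-partition of each $A\in\mathcal{R}$. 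The two coincide when $\mathcal{R}$ has a largest element. Otherwise they can differ, and then the reverse inclusion you set out to prove is false.

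Take $\mathcal{R}=\mathcal{D}_0$, so $\Gamma=\bigoplus_{\mathbb{Q}}G$ (e.g.\ $G=S_3$). Fix $1\neq f\in\Aut(G)$ and an infinite, co-infinite $Y\subseteq\mathbb{Q}$, and let $\varphi$ act by $f$ on the coordinates in $Y$ and trivially elsewhere. Then $\varphi\in\Aut(\Gamma)$ and $\Gamma_A^{\varphi}=\Gamma_A$ for every $A\in\mathcal{R}$. Suppose $\varphi=\psi\alpha^{\Gamma}$ with $\psi\in\mathcal{F}_{\mathcal{R}}$. Since $\psi$ also fixes every $\Gamma_A$, we get $\Gamma_{A^{\alpha}}=\Gamma_A$ for all $A$, so $\alpha=1$ and $\varphi=\psi$. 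But $\psi$ acts by one fixed automorphism of $G$ on all coordinates outside a finite set, and $\varphi$ does not. The same works for the paper's ring $\mathcal{R}_1$: apply $f$ on the intervals $[1-2^{-2k},1-2^{-2k-1})$, $k\geq 0$.

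So your argument can only succeed when $\mathcal{R}$ is unital, or with $\mathcal{F}_{\mathcal{R}}$ enlarged to all automorphisms acting coordinatewise through such $\mathcal{R}$-locally constant maps. The paper only cites Apps here, and its examples need just the easy inclusion $\mathcal{F}_{\mathcal{R}}(\Aut(\mathcal{R}))^{\Gamma}\le\Aut(\Gamma)$.

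\paragraph{Steps 1 and 3 are asserted, not proved, even when $X\in\mathcal{R}$.} In Step 1, Remak uniqueness in the finite groups $\Gamma_{\mathcal{B}}\cong G^{k}$ says nothing about a decomposition $\Gamma=H\times K$. That decomposition induces $\Gamma_{\mathcal{B}}=(H\cap\Gamma_{\mathcal{B}})\times(K\cap\Gamma_{\mathcal{B}})$ only if $\mathcal{B}$ already contains the unknown set $C$ with $H=\Gamma_C$, so the ``direct limit'' step is circular.

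The missing idea is to show first that the projection $\pi_H$ maps every $\Gamma_A$ into itself. For $\gamma\in\Gamma_A$, both components of $\gamma$ centralize $\Gamma_{X\setminus A}$, so $\pi_H(\gamma)$ takes values in $Z(G)$ off $A$. The map $\gamma\mapsto\pi_H(\gamma)|_{X\setminus A}$ is then a homomorphism into $Z(G)$-valued functions. It is trivial, because $\Gamma_A$ is generated by the images of the homomorphisms $g\mapsto g_B$ ($B\subseteq A$) and $\operatorname{Hom}(G,Z(G))=1$. Consequently $\pi_H$ preserves each $N_x=\{\gamma:\gamma(x)=1\}$ and induces an idempotent normal endomorphism of $\Gamma/N_x\cong G$. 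By indecomposability this endomorphism is $0$ or the identity, and $C=\operatorname{supp}\pi_H(g_X)$ for $g\neq 1$ gives $H=\Gamma_C$.

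Likewise, in Step 3 Remak uniqueness says nothing about the image of the diagonal $G_{A_i}$, which is not a direct factor. What you need is this: once $\varphi$ fixes every $\Gamma_A$, it fixes every $N_x$. Hence it induces $\theta_x\in\Aut(G)$ with $\gamma^{\varphi}(x)=\gamma(x)^{\theta_x}$. The map $x\mapsto\theta_x$ is locally constant because $(g_X)^{\varphi}\in\Gamma$ for each of the finitely many $g\in G$.
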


The following proposition provides an upper bound for the number of automorphism orbits of a Boolean power.
\begin{prop}\label{prop3.3}
Consider $\Gamma = G^{\mathcal{R}}$ a Boolean power of a group $G$. Suppose that $G$ has $r + 1$ automorphism orbits and that $\mathcal{R}$ has $m + 1$ ring-automorphism orbits. Then $G^{\mathcal{R}}$ has at most
\begin{center}
    $1 + m C^r_1 + m^2 C^r_2 + \cdots + m^{r} C^r_r$
\end{center}
automorphism orbits, where $C^r_k = \binom{r}{k}$.

\end{prop}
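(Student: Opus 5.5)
We will bound the orbits by producing a set of orbit representatives of controlled size. Let $h_1,\dots,h_r$ be representatives of the $r$ nontrivial automorphism orbits of $G$, and let $A_1,\dots,A_m$ be representatives of the $m$ nonzero ring-automorphism orbits of $\mathcal{R}$ (the remaining orbit being $\{\emptyset\}$). By the normal form recalled above, every nontrivial $\gamma\in\Gamma$ can be written uniquely as $\gamma=(g_1)_{B_1}\cdots(g_n)_{B_n}$, with the $g_i$ distinct and nontrivial and the $B_i$ pairwise disjoint nonempty members of $\mathcal{R}$. We want to move $\gamma$ by automorphisms into a normal form whose data are drawn from $\{h_1,\dots,h_r\}$ and $\{A_1,\dots,A_m\}$.

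\emph{Step 1: normalize the group elements.} Let $B=B_1\cup\dots\cup B_n$. For each $i$ pick $f^{(i)}\in\operatorname{Aut}(G)$ sending $g_i$ to some $h_{j(i)}$, and apply $\prod_i f^{(i)}_{B_i}$. Since the $B_i$ are disjoint, each factor acts only on its own block, and $\gamma$ becomes a product of elements $(h_{j(i)})_{B_i}$. Now merge blocks carrying the same representative: if $h_{j(i)}=h_{j(k)}$, then $(h)_{B_i}(h)_{B_k}=h_{B_i\cup B_k}$. After merging, $\gamma$ is equivalent to $\prod_{j\in J}(h_j)_{C_j}$ for a nonempty subset $J\subseteq\{1,\dots,r\}$ and pairwise disjoint nonempty $C_j\in\mathcal{R}$. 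So each such $\gamma$ is determined up to automorphism by a subset $J$ of size $k$ together with $k$ elements of $\mathcal{R}$.

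\emph{Step 2: normalize the sets.} Using $(\operatorname{Aut}\mathcal{R})^\Gamma$ we can send one nonzero $C_j$ to its representative $A_\ell$. Doing this independently for all $k$ sets at once would give at most $m^k$ choices per $J$, hence the total bound $1+\sum_k m^k\binom{r}{k}$, where the $1$ counts the identity.

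\emph{Main obstacle.} A single ring automorphism cannot in general move the tuple $(C_j)_{j\in J}$ coordinatewise to an arbitrary tuple of representatives. So Step 2 as stated is not justified, and the claimed count has to be read as counting types $(J,\text{orbit labels of the }C_j)$ for which the orbits can be realized jointly. To make the argument honest, I would first try to show that tuples of disjoint sets with prescribed individual orbit types form a single orbit, possibly after combining $(\operatorname{Aut}\mathcal{R})^\Gamma$ with the $f_A$'s. Failing that, I would state the needed homogeneity of $\mathcal{R}$ as a hypothesis, which holds in the atomless case intended for the examples. This step is where I expect the real difficulty.
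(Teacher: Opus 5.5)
Your Step 1 is exactly the paper's first step: apply $(f_1^{-1})_{B_1}\cdots(f_n^{-1})_{B_n}$ and merge blocks into sets $D_1,\dots,D_l$. As you say yourself, Step 2 is not proved, so the proposal is not yet a proof. The paper does not supply the missing argument either. It only asserts that the $D_k$ ``may be fixed'' and that some $\beta\in\operatorname{Aut}(\mathcal{R})$ brings the element to one of $m^l$ normal forms.

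It helps to pin down exactly what is missing. For $\gamma\in\Gamma$ put $D_j(\gamma)=\{x\in X:\gamma(x)\in g_j^{\operatorname{Aut}(G)}\}\in\mathcal{R}$.
\begin{itemize}
\item Every element of $\mathcal{F}_{\mathcal{R}}$ leaves all $D_j(\gamma)$ unchanged.
\item $\mathcal{F}_{\mathcal{R}}$ is transitive on the elements with a given tuple $(D_1,\dots,D_r)$: apply suitable $f^{(t)}_{E_t}$ on a common refining partition $E_1,\dots,E_s\in\mathcal{R}$.
\item $D_j(\gamma^{\alpha^\Gamma})=D_j(\gamma)^{\alpha}$.
\end{itemize}
Let $e_l$ be the number of $\operatorname{Aut}(\mathcal{R})$-orbits on $l$-tuples of pairwise disjoint nonempty elements of $\mathcal{R}$. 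Then $\mathcal{F}_{\mathcal{R}}\rtimes(\operatorname{Aut}\mathcal{R})^\Gamma$ has exactly $1+\sum_{l=1}^{r}\binom{r}{l}e_l$ orbits on $\Gamma$. For $G$ as in Proposition~\ref{prop3.2} this number is $\omega(\Gamma)$. So combining with the $f_A$'s cannot help. The statement comes down to an inequality such as $e_l\le m^l$, whereas the hypothesis only gives $e_1=m$. For the same reason, reading the bound as a count of realizable ``label tuples'' bounds nothing unless labels determine orbits.

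Your proposed homogeneity hypothesis is stronger than needed and already fails for the unital ring $\mathcal{R}_2$. There all nonempty proper elements form one orbit. Yet pairs of disjoint nonempty proper elements with $D_1\cup D_2=[0,1)$ are not conjugate to those with $D_1\cup D_2\neq[0,1)$. The count still holds there: $e_2=2\le m^2=4$.

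In general, write $\mathcal{R}|_D=\{E\in\mathcal{R}:E\subseteq D\}$. The orbit of $(D_1,\dots,D_l)$ is determined by the isomorphism types of $\mathcal{R}|_{D_1},\dots,\mathcal{R}|_{D_l}$ together with that of the annihilator ideal of $D_1\cup\dots\cup D_l$. The individual orbits of the $D_i$ need not determine this last datum, because isomorphism types of Boolean algebras do not cancel. Neither your argument nor the paper's derives any bound on $e_l$ from $e_1$.

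What you can honestly prove is the proposition under the extra hypothesis $e_l\le m^l$ for $l\le r$. That hypothesis does hold for $\mathcal{R}_1$, which is all the examples use. Every $\mathcal{R}_1|_D$ with $D\neq\emptyset$ is the countable atomless Boolean algebra, and every such annihilator is isomorphic to $\mathcal{R}_1$. Gluing isomorphisms of these factors gives $e_l=1$.
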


\begin{proof}
By hyppothesis there exist $g_0 = e, g_1, ..., g_r \in G$ and $A_0 = \emptyset, A_1, ..., A_m \in \mathcal{R}$ such that $G = \dot{\cup}_{j=0}^{r} g_j^{Aut(G)}$ and $\mathcal{R} = \dot{\cup}_{i=0}^{m} A_i^{Aut(\mathcal{R})}$. Let ${\bf g} \in \Gamma$ be a nontrivial element. Note that the element ${\bf g}$ can be written in the form
\begin{center}
    ${\bf g} = (h_1)_{B_1}(h_2)_{B_2} \cdots (h_n)_{B_n},$
\end{center}
where $B_1, B_2, \dots, B_n$ are pairwise disjoint. Reordering if necessary, there exist
$i_1 < \cdots < i_{l-1} \in \{1,2,\dots,n\}$ with $l \le r$, 
$j_1, \dots, j_l \in \{1,2,\dots,r\}$, and 
$f_1, \dots, f_n \in \operatorname{Aut}(G)$ such that
\begin{center}
$h_{1} = g_{j_1}^{f_1}, \ h_2 = g_{j_1}^{f_2}, \dots, h_{i_1} = g_{j_1}^{f_{i_1}},$ \\[8pt]
$h_{i_1+1} = g_{j_2}^{f_{i_1+1}}, \dots, h_{i_2} = g_{j_2}^{f_{i_2}},$ \\[8pt]
$\cdots$ \\[4pt]
$h_{i_{l-1}+1} = g_{j_l}^{f_{i_{l-1}+1}}, \dots, h_n = g_{j_l}^{f_n}$.
\end{center}

Consider the automorphisms $(f_1^{-1})_{B_1}, \dots, (f_n^{-1})_{B_n}$ of $\Gamma$. Then
\[
f = (f_1^{-1})_{B_1} \cdots (f_n^{-1})_{B_n} \in \operatorname{Aut}(\Gamma)
\]
is such that
\[
{\bf g}^f
= (h_1^{f_1^{-1}})_{B_1} \cdots (h_n^{f_n^{-1}})_{B_n} =\]
\[= (g_{j_1})_{B_1} \cdots (g_{j_1})_{B_{i_1}}
(g_{j_2})_{B_{i_1+1}} \cdots (g_{j_2})_{B_{i_2}} \cdots (g_{j_l})_{B_{j_{l-1} + 1}} \cdots(g_{j_l})_{B_n}.
\]

Take $D_{k} = B_{i_{k-1} + 1} \dot{\cup} B_{i_{k-1} + 2} \dot{\cup} \dots \dot{\cup} B_{i_{k}}$. Note that if $\alpha \in Aut(\Gamma)$, then
\[
(D_{1} \dot{\cup} \cdots \dot{\cup} D_{l})^\alpha
= D_1^\alpha \dot{\cup} \cdots \dot{\cup} D_l^\alpha.
\]
Thus the $D_1, \dots, D_l$ may be fixed for each $l$ and each $g_{j_1}, \dots, g_{j_l}$. Hence there exists $\beta \in \operatorname{Aut}(\mathcal{R})$ such that
\[
{\bf g}^{f \alpha^\Gamma \beta^\Gamma}
= (g_{j_1})_{D_1} \cdots (g_{j_l})_{D_l}.
\]

Proceeding in this way, we obtain the following orbits: \\

\noindent
1. The trivial orbit; \\

\noindent
2. The orbits of elements ${\bf g}$ such that
$h_1, \dots, h_n \in g_i^{\operatorname{Aut}(G)}$,
yielding at most $m \binom{r}{1}$ orbits; \\

\noindent
3. The orbits of elements ${\bf g}$ such that
$$h_1, \dots, h_{i_1} \in g_{j_1}^{\operatorname{Aut}(G)}, \dots,
h_{i_{l-1}+1}, \dots, h_{i_l} \in g_{j_l}^{\operatorname{Aut}(G)}, \, l \geq 2,$$
yielding at most $m^l \binom{r}{l}$ orbits. \\

Therefore, the subgroup
$\mathcal{F}_{\mathcal{R}} \rtimes (\operatorname{Aut}(\mathcal{R}))^\Gamma$
of $\operatorname{Aut}(\Gamma)$ acts on $\Gamma$ with at most
\begin{center}
$1 + m \binom{r}{1} + m^2 \binom{r}{2} + \cdots + m^{r} \binom{r}{r}$
\end{center}
orbits.

\end{proof}
If $G$ is residually finite, then $\mathcal{G} = \prod_{X} G$ is also residually finite. In particular, any Boolean power $\Gamma = G^{\mathcal{R}}$ is residually finite whenever $G$ is residually finite. Next, we present some examples of infinite residually finite non-abelian groups that have finitely many orbits under automorphisms. For this purpose, we consider the following Boolean rings.

Let $X = [0,1) \subset \mathbb{R}$ endowed with the topology $\mathcal{T}$ generated by the basis
\[
\mathbf{b} = \{ [a,b) \mid a,b \in \mathbb{Q}, \, 0 \le a < b < 1 \}.
\]
Note that $(\mathcal{T}, \bigtriangleup, \cap)$ is a Boolean ring. Let $\mathcal{R}_1$ be the subring of $\mathcal{T}$ generated by $\mathbf{b}$. We also consider the subring $\mathcal{R}_2$ of $\mathcal{T}$ generated by
\[
\mathbf{b}_1 = \{ [a,b) \mid a,b \in \mathbb{Q}, \, 0 \le a < b \le 1 \}.
\]
Note that $\mathcal{R}_1$, $\mathcal{R}_2$, and 
\[
\mathcal{D}_0 = \{ A \in \mathcal{P}(\mathbb{Q}) \mid A \text{ is finite} \}
\]
are countable Boolean rings. We now analyze the action of the automorphism groups of these rings and describe their orbits. Consider the interval $[0,1)$ endowed with the topology $\mathcal{T}$ generated by the basis $\mathbf{b}$. Note that this topology coincides with the one generated by the basis $\mathbf{b}_1$.

Any homeomorphism $f : [0,1) \to [0,1)$ such that $A^f, A^{f^{-1}} \in \mathcal{R}_1$ for every $A \in \mathcal{R}_1$ defines a ring automorphism $\alpha_f$ of $\mathcal{R}_1$. Indeed, define $\alpha_f : \mathcal{R}_1 \to \mathcal{R}_1$ by $A^{\alpha_f} = A^f$. Then
    $$
    (A \bigtriangleup B)^{\alpha_f} = (A \bigtriangleup B)^f = ((A \setminus B) \cup (B \setminus A))^f =$$
    $$=(A \setminus B)^f \cup (B \setminus A)^f = A^f \bigtriangleup B^f = A^{\alpha_f} \bigtriangleup B^{\alpha_f},
    $$
    and
    $$
    (A \cap B)^{\alpha_f} = (A \cap B)^f = A^f \cap B^f = A^{\alpha_f} \cap B^{\alpha_f}.
    $$
    Hence, $\alpha_f$ is a ring homomorphism. Since $\emptyset^f = \emptyset$ and $A^{f^{-1}} \in \mathcal{R}_1$, it follows that $\alpha_f$ is injective and surjective, and therefore a ring automorphism.

    Now, given $A, B \in \mathcal{R}_1 \setminus \{\emptyset\}$, there exists a homeomorphism $f$ from $[0,1)$ to $[0,1)$ such that $A^f = B$. In fact, suppose that  
    $$A = [a_1, b_1) \cup \cdots \cup [a_n, b_n) \text{ and } B = [x_1, y_1) \cup \cdots \cup [x_m, y_m)$$ 
    with $0 \leq a_1 < b_1 < \dots < a_n < b_n < 1$ and $0 \leq x_1 < y_1 < \dots < x_m < y_m < 1$. We have twelve cases to consider: $a_1 = 0$ or $a_1 > 0$, $n < m$, $n = m$, or $n > m$, $x_1 = 0$ or $x_1 > 0$. Consider the case $a_1 = 0$, $n < m$ and $x_1 > 0$. Write
    $$[a_{n}, b_n) = [a_{n1}, b_{n1}) \dot{\cup} [a_{n2}, b_{n2}) \dot{\cup} \dots \dot{\cup} [a_{n(m-n+1)}, b_{n(m-n+1)})$$
    $$ \text{ with }a_{n1} = a_n, a_{n2} = b_{n1}, \dots, a_{n(m-n + 1)} = b_{n(m-n)}, b_{n(m-n+1)} = b_{n};$$
    
    $$[b_{n}, 1) = [c_{1}, d_{1}) \dot{\cup} [c_{2}, d_{2}) \dot{\cup} \dots \dot{\cup} [c_{m-n+1}, d_{m-n+1})$$
    $$ \text{ with }c_{1} = b_n, c_{2} = d_{1}, \dots, c_{m-n+1} = d_{m-n}, d_{m-n+1} = 1;$$
    
    $$[0, 1) \setminus A = [b_1, a_2) \dot{\cup} [b_2, a_3) \dot{\cup} \dots \dot{\cup}[b_{n-1}, a_n) \dot{\cup} [b_{n}, 1);$$ \\
    and
    
    $$[0, 1) \setminus B = [0, x_1) \dot{\cup} [y_{1}, x_2) \dot{\cup} \dots \dot{\cup}[y_{n-2}, x_{n-1}) \dot{\cup} [y_{n-1}, x_n) \dot{\cup} \dots$$
    $$\dots \dot{\cup} [y_{m-1}, x_{m})\dot{\cup} [y_{m}, 1).$$
 Since
    \begin{align*} [a, b) & \rightarrow [x, y) \\
    t &\mapsto \frac{x - y}{a - b}t + \frac{ay - bx}{a - b} \end{align*}
    is a homeomorphism, there is a homeomorphism $f: [0, 1) \rightarrow [0, 1)$ such that
    \begin{align*} [a_1 = 0, b_1)^f &= [x_1, y_1) \\ [a_2, b_2)^f &= [x_2, y_2) \\
    \vdots \\
    [a_{n-1}, b_{n-1})^f &= [x_{n-1}, y_{n-1}) \\
    [a_{n1}, b_{n1})^f &= [x_{n}, y_{n})\\
    [a_{n2}, b_{n2})^f &= [x_{n+1}, y_{n+1})\\
    \vdots \\
    [a_{n(m-n+1)}, b_{n(m-n+1)})^f &= [x_{m}, y_{m})\\   
    [b_1, a_2)^f &= [0, x_1)\\
    [b_2, a_3)^f &= [y_1, x_2)\\
    \vdots \\
    [b_{n-1}, a_n)^f &= [y_{n-2}, x_{n-1}) \\    
    [c_{1}, d_{1})^f &= [y_{n-1}, x_n)\\
    [c_{2}, d_{2})^f &= [y_{n}, x_{n+1})\\
    \vdots\\
    [c_{m-n}, d_{m-n})^f &= [y_{m-1}, x_m)\\
    [c_{m-n+1}, d_{m-n+1} &= 1)^f = [y_{m}, 1)
    \end{align*}
    As $f$ is a picewise linear hemeomorphism from $[0, 1)$ to $[0, 1)$ and $[c_{m-n+1}, 1)^f = [y_{m}, 1)$, we have $D^f \in \mathcal{R}_1$ for all $D \in \mathcal{R}_1$. By construction $A^f = B$. The other cases follow the same way. Therefore the ring automorphism group $\mathrm{Aut}(\mathcal{R}_1)$ acts with only two orbits on $\mathcal{R}_1$, that is,
    $$
    \mathcal{R}_1 = \{\emptyset\} \,\dot{\cup}\, [0,1/2)^{\mathrm{Aut}(\mathcal{R}_1)}.
    $$ 

Similarly, one can prove that
    $$
    \mathcal{R}_2 = \{\emptyset\} \,\dot{\cup}\, [0,1/2)^{\mathrm{Aut}(\mathcal{R}_2)} \,\dot{\cup}\, [0,1)^{\mathrm{Aut}(\mathcal{R}_2)}.
    $$
On the other hand, the ring $\mathcal{D}_0$ has infinitely many orbits under its ring automorphism group.

\begin{ex} ({\bf Infinite, countable, solvable, residually finite group that is not virtually abelian and has four automorphism orbits}) 
Consider the symmetric group $S_3 = \mathrm{Sym}(\{1, 2, 3\})$ of degree $3$ and let $\mathcal{R}_1$ be the Boolean ring defined above. Let $\Gamma_1 = S_3^{\mathcal{R}_1}$ be the Boolean power of $S_3$ by $\mathcal{R}_1$. Since the center $Z(S_3)$ of $S_3$ is trivial, by Propositions \ref{prop3.2} and \ref{prop3.3}, we have
\[
\omega(\Gamma_1) \leq 1 + 1 \cdot C^2_{1} + 1^2 \cdot C^{2}_{2}= 1 + 2 + 1 = 4.
\]
Since $spec(\Gamma_1) = \{1, 2, 3, 6\}$, it follows that $\omega(\Gamma_1) = 4$.
\end{ex}

\begin{ex} \label{Ex2.1.3} ({\bf Infinite, countable, residually finite group that is not virtually solvable and has eight automorphism orbits})  
Consider $A_5$, the alternating group of degree $5$, and let $\mathcal{R}_1$ be the Boolean ring defined above. Let $\Gamma_2 = A_5^{\mathcal{R}_1}$ be the Boolean power of $A_5$ by $\mathcal{R}_1$. Since the center $Z(A_{5})$ of $A_{5}$ is trivial, by Propositions \ref{prop3.2} and \ref{prop3.2}, we have
\[
\omega(\Gamma_2) \leq 1 + 1 \cdot C^3_{1} + 1^2 \cdot C^{3}_{2} + 1^3 \cdot C^{3}_{3}= 1 + 3 + 3 + 1 = 8.
\]
Since $spec(\Gamma_2) = \{1, 2, 3, 5, 6, 10, 15, 30\}$, it follows that $\omega(\Gamma_2) = 8$.
\end{ex}

\begin{ex} \label{exddd} ({\bf Infinite, uncountable, residually finite group that is not virtually solvable and has eight automorphism orbits})  
Let $\mathcal{R}$ be the Boolean ring generated by
\[
\mathbf{b}_2 = \{[a, b) \mid a, b \in \mathbb{R}, \text{ with } 0 \leq a < b < 1 \}.
\]
As in the case of the ring  $\mathcal{R}_1$, it follows that $\mathcal{R}$ has $2$ orbits under ring automorphisms. Similarly to Example \ref{Ex2.1.3}, the Boolean power $A_{5}^{\mathcal{R}}$ has $8$ automorphism orbits.
\end{ex}

\begin{ex} \label{BWR} ({\bf Boolean wreath product})  
Let $H$ be a subgroup of the symmetric group $Sym(\mathcal{R})$, where $\mathcal{R}$ is a Boolean subring of $\mathcal{P}(X)$ for some set $X$. Consider the semidirect product
\[
W = G^{\mathcal{R}} H,
\]
where
\[g_{A}^h = g_{A^h}, \]
$G^{\mathcal{R}} = \langle g_A \mid g \in G, A \in \mathcal{R} \rangle$ and $h \in H$.  We call $W$ the Boolean wreath product of $G$ by $H$. 

Suppose that $G$ and $\mathcal{R}$ have finitely many orbits under automorphisms and $H = (\mathcal{R}, +)$, then the Boolean wreath product $W = G^{\mathcal{R}} H$ has no more than $\omega(G^{R}) \omega(H)$ orbits, where
\[g_{A}^B = g_{A^B} = g_{A + B},\]
and a ring automorphis $\alpha$ extends for $W$ by
\[(g_A^{B})^{\alpha} = g_{A^{\alpha}}^{B^{\alpha}} = g_{A^{\alpha}+ B^{\alpha}} = g_{(A+B)^{\alpha}}.\]
\end{ex}



\section{Finitely generated locally graded groups with finitely many orbits under automorphisms}

In \cite{Osin}, Denis Osin proved that every countable torsion-free group can be embedded into a $2$-generated group with finitely many conjugacy classes. Thus, there exists a finitely generated group with finitely many automorphism orbits. In this section, we show that this does not occur in infinite locally graded groups. In particular, the group $S_3 \wr \mathbb{Z}$ does not have finitely many automorphism orbits.

\begin{thmB} \label{cor:rf}
Let $d$ and $n$ be positive integers. 
Let $G$ be a $d$-generator locally graded group with $\omega(G)=n$. Then $G$ is finite. Moreover, if
$\pi$ is a finite set of primes such that $|G|$ is a $\pi$-number, then $|G|$ is $\{d,n,\pi\}$-bounded.
\end{thmB}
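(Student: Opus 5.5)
Throughout, write $G^k=\langle g^k\mid g\in G\rangle$; it is a characteristic subgroup of $G$.

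The plan is to reduce to Theorem A by showing that $G$ is residually finite. The first step is to show that $G$ has only finitely many characteristic subgroups. Every characteristic subgroup is a union of $\operatorname{Aut}(G)$-orbits, and there are $n$ orbits, so there are at most $2^n$ characteristic subgroups.

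The second step produces a characteristic subgroup of finite index. Since $G$ is finitely generated and locally graded, $G$ has a proper subgroup of finite index. Since $G$ is finitely generated, it has only finitely many subgroups of any given finite index. Hence the intersection of all subgroups of a fixed index $k$ is characteristic and of finite index. So $G$ has a proper characteristic subgroup $K_1$ of finite index, and $K_1$ is again finitely generated, by Schreier's lemma.

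The third step iterates this. Let $R$ be the finite residual of $G$, that is, the intersection of all finite-index normal subgroups. $R$ is characteristic. Because $G$ is finitely generated, $G/R$ is residually finite. I would then show $R=1$.

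Suppose $R\neq 1$. If $R$ had finite index, it would be finitely generated and locally graded, so it would have a proper finite-index subgroup. Its characteristic core in $R$ would then be a normal subgroup of $G$ of finite index properly inside $R$, contradicting the definition of $R$. So $R$ has infinite index, and there is a strictly descending chain of characteristic finite-index subgroups above $R$. This chain is infinite, because $G/R$ is infinite and residually finite and finite-index characteristic subgroups such as $G^{k}$-type verbal subgroups separate points. An infinite chain contradicts the first step. Hence $[G:R]<\infty$, which is impossible unless $R=1$. Therefore $G$ is residually finite.

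With residual finiteness in hand, Theorem A shows that $G$ is locally finite. Being finitely generated, $G$ is finite.

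A cleaner alternative for finiteness avoids the residual $R$ altogether. The second step together with finitely many characteristic subgroups gives a minimal characteristic subgroup $M$ of finite index. $M$ is finitely generated and locally graded, so if $M\neq 1$ it has a proper finite-index characteristic subgroup (characteristic in $M$, hence characteristic in $G$). This contradicts minimality. So $M=1$ and $G$ is finite. I would use this argument instead; it is the crux and needs no residual-finiteness detour.

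For the bound, $G$ is finite with $n$ orbits, so its element orders take at most $n$ values. The exponent $e$ therefore divides a product of at most $n-1$ element orders, each a $\pi$-number. I still need to bound $e$ in terms of $\{d,n,\pi\}$.

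Each element order is a $\pi$-number. To bound the $p$-part of an order, consider an element $g$ of order $p^a$. Its powers $g^{p^i}$ have pairwise distinct orders, so they lie in distinct orbits, which forces $a<n$. Hence $e$ divides $\prod_{p\in\pi}p^{\,n-1}$, and $e$ is $\{n,\pi\}$-bounded.

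Now $G$ is a $d$-generator finite group of exponent dividing $e$. By Zelmanov's solution of the Restricted Burnside Problem, $|G|$ is $\{d,e\}$-bounded, hence $\{d,n,\pi\}$-bounded.

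The main obstacle is the finiteness step, specifically making sure the iterated finite-index characteristic subgroups exist. This relies on finite generation passing to finite-index subgroups and on a finitely generated group having finitely many subgroups of a given index.
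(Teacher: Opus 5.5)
Your proof is correct and follows the paper's route. Your minimal finite-index characteristic subgroup argument is the paper's infinite descending chain $K_1>K_2>\cdots$ of characteristic subgroups, recast using the finiteness of the set of characteristic subgroups. Your exponent bound (you even get $p^{n-1}$ in place of the paper's $p^{n}$), followed by Zelmanov's solution of the Restricted Burnside Problem, is exactly the paper's part (b). You were right to drop the residual-finiteness detour: it is not needed, and its claim that the verbal subgroups $G^k$ have finite index is unjustified in general, since $G/G^k$ need not be finite for a finitely generated $G$.
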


\begin{proof}
(a) We argue by contradiction and suppose that $G$ is infinite. Since $G$ is a finitely generated locally graded group, it follows that there is a proper characteristic subgroup of finite index $K_1$ in $G$ (cf. \cite[7.2.9]{Hall}). As $G$ is infinite, we have  $K_1$ is also infinite. Moreover, the subgroup $K_1$ has at most $n-1$ automorphism orbits under the action of $\Aut(G)$. Moreover, $K_1$ is a finitely generated locally graded group. We now apply this argument again, with $G$ replaced by $K_1$, to obtain an infinite proper characteristic subgroup $K_2$ of finite index in $G$ which are contained in $K_1$. In particular, $\omega(K_2) \leqslant n-2$. We can obtain an infinite (proper) chain of characteristic subgroups of $G$, say $\{K_i\}_{i \in \mathbb{N}}$ such that $n = \omega(G) > \omega(K_1) > \omega(K_2) > \ldots > \omega(K_i) > \ldots \geqslant 1$. This contradicts our assumption. Thus, $G$ is finite. \\

\noindent (b) By item (a), the group $G$ is finite. It remains to prove that the order $|G|$ is $\{d,n,\pi\}$-bounded. 
We first prove that the exponent $\exp(G)$ is $\{n,\pi\}$-bounded. Set $p_1, \ldots, p_k$ be primes such that $\pi= \{p_1,\ldots,p_k\}$. We first observe that $\exp(G)$ divides $N$, where $N = p_1^n \cdot \ldots \cdot p_k^n$. Since $\omega(G)=n$, it follows that the exponent $\exp(P_i)$ divides $p_i^n$, for every $P_i \in \Syl_{p_i}(G)$. Now, we show that $\exp(G)$ divides $N$. Choose arbitrarily $x \in G$. It is clear that the $p_i$-part of $|x|$ divides $p_i^n$. In particular, $|x|$ divides $|N|$. As $\exp(G) = \lcm \   \{|x| : \ x \in G\}$ we have $\exp(G)$ divides $N$. By the positive solution of the RBP, there exists a positive integer $B(d,N)$ such that $|G| \leqslant B(d,N)$. Since $N$ is $\{n,\pi\}$-bounded, we deduce that $G$ is finite with $\{d,n,\pi\}$-bounded order, as well. The proof is complete.      
\end{proof}

\section{Orbits under automorphisms of Mal'cev $\mathbb{Q}$-completion of nilpotent free groups}

Let $N_{r,c}$ denote the free nilpotent group of rank $r$ and nilpotency class $c$. Note that the group $N_{r,c}$ is residually finite and torsion-free; by Theorem A, it does not have finitely many automorphism orbits. Let
$G = N_{r,c}^{\mathbb{Q}}$ be the Mal'cev $\mathbb{Q}$-completion of $N_{r,c}$. In this section, we prove that $G$ has finitely many automorphism orbits if and only if $G \cong N_{r,2}^{\mathbb{Q}}$ or $G \cong N_{2,3}^{\mathbb{Q}}$.

Before presenting the proofs, we recall the concepts and terminology that will be used in order to facilitate the understanding of the results.
\subsection{Basic commutador}
\begin{defi}
Let $G$ be a group generated by $X = \{x_1, \cdots, x_n\}$. 
A \emph{basic commutator} $b_j$ of weight $w(b_j)$ is defined as follows:\\
1. The elements of $X$ are the basic commutators of weight one. 
    We impose an arbitrary order on them and relabel them as 
    $b_1, b_2, \cdots, b_k$, where $b_i < b_j$ if $i < j$.\\
        2. Suppose that we have already defined and ordered the basic commutators 
    of weight less than $l > 1$. 
    The basic commutators of weight $l$ are the elements $[b_i, b_j]$, where:\\
   (i) $b_i$ and $b_j$ are basic commutators and        $w(b_i) + w(b_j) = l$,\\
        (ii) $b_i > b_j$, and\\
        (iii) if $b_i = [b_s, b_t]$, then $b_j \geq b_t$.\\
        3. The commutators of weight $l$ come after all commutators 
    of weight less than $l$ and are ordered arbitrarily among themselves.

The sequence $b_1, b_2, \cdots$ is called the \emph{basic sequence of basic commutators} on $X$.
\end{defi}

 \subsection{Mal'cev $\mathbb{Q}$-completion}     

\,

\,

For positive integers $r,c > 1$, we denote by  $N_{r,c} = G$  be a free    group of rank $r$ and   nilpotency  class $c$. 
Consider a polycyclic series of $ G$, of Hirsch length $n$,
\[
G = G_1 \rhd G_2 \rhd \cdots \rhd G_{n+1} = 1.
\]
Since each factor of this series is infinite cyclic, we may choose $u_i \in G_i$ such that
\[
G_i = \langle G_{i+1}, u_i \rangle, \quad i \in \{1, \cdots, n\}.
\]
Thus, every element of $G$ can be expressed in the form
\[
u_1^{\alpha_1} \cdots u_n^{\alpha_n},
\]
where $\alpha_1, \cdots, \alpha_n \in \mathbb{Z}$.
By definition, the set $\bar{u} = \{u_1, \cdots, u_n\}$ associated with the polycyclic series is called a \emph{Mal'cev basis} for $G$. 
The element $u_1^{\alpha_1} \cdots u_n^{\alpha_n}$ is said to have coordinates 
$(\alpha_1, \cdots, \alpha_n) \in \mathbb{Z}^n$ with respect to $\bar{u}$.
According to \cite[Theorem 4.8]{verde}, any basic sequence of basic commutators 
of weight at most $c$ forms a Mal'cev basis of the group $N_{r,c}$.

Let $\{u_1, \cdots, u_n\}$ be a Mal'cev basis of $N_{r,c}$ with respect to a polycyclic series. 
If $x = u_1^{\alpha_1} \cdots u_n^{\alpha_n}$ and 
$y = u_1^{\beta_1} \cdots u_n^{\beta_n}$ for some $\alpha_i, \beta_i \in \mathbb{Z}$, 
and if $\lambda \in \mathbb{Z}$, then
\[
xy = u_1^{f_1(\bar{\alpha}, \bar{\beta})} \cdots 
u_n^{f_n(\bar{\alpha}, \bar{\beta})}
\quad \text{and} \quad
x^\lambda = u_1^{g_1(\bar{\alpha}, \lambda)} \cdots 
u_n^{g_n(\bar{\alpha}, \lambda)}.
\]
The polynomials $f_i$ and $g_i$ are called the \emph{multiplication polynomials} 
and \emph{exponentiation polynomials} for $N_{r,c}$.

The Mal'cev $\mathbb{Q}$-completion of the group $N_{r,c}$ is the group defined by the set
\begin{center}
$N_{r,c}^{\mathbb{Q}} = \left\{ \bar{u}^{\bar{\beta}} = 
u_1^{\beta_1} \cdots u_n^{\beta_n} \mid \beta_i \in \mathbb{Q} \right\},$
\end{center}
equipped, respectively, with the multiplication and exponentiation by 
$\lambda \in \mathbb{Q}$ given by
\[
xy = u_1^{f_1(\bar{\alpha}, \bar{\beta})} \cdots 
u_n^{f_n(\bar{\alpha}, \bar{\beta})}
\]
and
\[
x^\lambda = u_1^{g_1(\bar{\alpha}, \lambda)} \cdots 
u_n^{g_n(\bar{\alpha}, \lambda)},
\]
where $x, y \in N_{r,c}^{\mathbb{Q}}$ are given by 
$x = u_1^{\alpha_1} \cdots u_n^{\alpha_n}$ and 
$y = u_1^{\beta_1} \cdots u_n^{\beta_n}$ with 
$\alpha_i, \beta_i \in \mathbb{Q}$.\\

For more details on basic commutators and the Mal'cev $\mathbb{Q}$-completion, see \cite{verde}.

\subsection{Width of the Mal'cev $\mathbb{Q}$-completion of $N_{r,c}$}

\,

\,

Let $G$ be a group. The width of an element $g$ in the commutator subgroup $G'$, 
denoted by $\lambda(g)$, is the smallest number $m$ such that $g$ can be expressed 
as a product of $m$ commutators of elements of $G$. 
The width $\lambda(G)$ of the group $G$ is defined as
\[
\lambda(G) = \max_{g \in G'} \lambda(g),
\]
whenever this maximum exists. Otherwise, we say that the width of $G$ is infinite. 
An element $g \in G$ such that $\lambda(g) = \lambda(G)$ is called an element of maximal width.

\begin{prop}[\cite{larguracomutator}] The following hold \\
    (a) The width of the group $N_{r,2}^{\mathbb{Q}}$ is equal to 
    $\left[\frac{r}{2} \right]$, for $r \geq 2$; \\        (b) The width $\lambda(N_{r,c})$ is equal to $r$, where 
    $r \geq 2$ and $c \geq 3$;\\
        (c) The width $\lambda(N_{r,c}^{\mathbb{Q}})$ is equal to $r$, 
    for $r \geq 2$, $c \geq 4$, as well as for $r \geq 3$, $c \geq 3$. 
    In the exceptional case, $\lambda(N_{2,3}^{\mathbb{Q}}) = 1$.\\
\end{prop}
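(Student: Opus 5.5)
Since this proposition is quoted from \cite{larguracomutator}, I would reconstruct its proof by working modulo the terms of the lower central series. Throughout I identify $\gamma_k(G)/\gamma_{k+1}(G)$ with the degree $k$ component of the free Lie ring (respectively, free Lie $\mathbb{Q}$-algebra) on $x_1,\dots,x_r$.

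\emph{Part (a).} For $G=N_{r,2}^{\mathbb{Q}}$ we have $G'=Z(G)\cong\Lambda^2(\mathbb{Q}^r)$, and the map $[x,y]\mapsto \bar x\wedge\bar y$ is exact because the class is $2$. A product of $m$ commutators is therefore a sum of $m$ decomposable bivectors, that is, an alternating form of rank at most $2m$. Conversely, every alternating form over $\mathbb{Q}$ has a symplectic normal form $\sum_{i\le k} e_{2i-1}\wedge e_{2i}$ with $2k\le r$, so it is a sum of at most $[r/2]$ decomposables. Any decomposable can be realised as a commutator $[x,y]$ with $x,y$ lifting its factors, and the lifts may be chosen with rational coordinates. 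Hence every element of $G'$ has width at most $[r/2]$, and a form of rank $2[r/2]$ attains this bound. This gives $\lambda=[r/2]$.

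\emph{Upper bound $r$ in (b), (c).} I would show that every $g\in G'$ can be written as $\prod_{i=1}^r[x_i,u_i]$, where $x_1,\dots,x_r$ are the free generators, by induction on the weight $k$.
\begin{itemize}
\item At weight $2$, the space $\gamma_2/\gamma_3$ is spanned by the $[x_i,x_j]$, and these can be grouped by their first entry.
\item For the inductive step, suppose $g\equiv\prod[x_i,u_i] \pmod{\gamma_{k+1}}$. The error lies in $\gamma_{k+1}/\gamma_{k+2}$, and this space equals $\sum_i [x_i,\gamma_k/\gamma_{k+1}]$ because every Lie monomial of degree $k+1$ is a combination of left-normed brackets $[x_i,w]$.
\item Replacing $u_i$ by $u_iv_i$ with $v_i\in\gamma_k$ multiplies $[x_i,u_i]$ by $[x_i,v_i]$, modulo conjugation, which lands in $\gamma_{k+2}$. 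This correction kills the error while keeping only $r$ factors.
\end{itemize}
The same argument works in $N^{\mathbb{Q}}$, since the homogeneous components are $\mathbb{Q}$-spaces.

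\emph{Lower bounds and the exception.} I expect this to be the main obstacle. To show width exactly $r$, I would work in $G/\gamma_4$ and choose a target such as $g=\prod_{i}[x_i,[x_i,x_{i+1}]]$ times a generic weight-$2$ part, say $\sum_i[x_i,x_{i+1}]$, which is the image of a form of full rank. Suppose $g$ were a product of $m<r$ commutators $[a_j,b_j]$.
\begin{itemize}
\item The weight-$2$ component forces the pairs $(\bar a_j,\bar b_j)$ to span a bivector of prescribed shape.
\item The weight-$3$ component is then an expression bilinear in the linear parts together with the weight-$2$ corrections of the $a_j,b_j$.
\item A dimension count (the number of free parameters against $\dim\gamma_3/\gamma_4$), or equivalently a rank argument on the linear map from corrections to $\gamma_3/\gamma_4$, should show that fewer than $r$ commutators cannot hit the target for $r\ge3$, $c=3$.
\item For $c\ge4$, including $r=2$, the weight-$4$ layer supplies the extra rigidity.
\end{itemize}
Over $\mathbb{Z}$ (part (b)), the case $c=3$, $r=2$ is handled additionally by a congruence or divisibility obstruction. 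That obstruction disappears over $\mathbb{Q}$, which is exactly why $N_{2,3}^{\mathbb{Q}}$ is exceptional.

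For the exception, I would show directly that every element of $(N_{2,3}^{\mathbb{Q}})'$ is a single commutator $[a,b]$. Write $a=x^{\alpha_1}y^{\alpha_2}c_1$ and $b=x^{\beta_1}y^{\beta_2}c_2$ with $c_1,c_2\in\gamma_2$. The weight-$2$ coordinate of $[a,b]$ is the determinant $\alpha_1\beta_2-\alpha_2\beta_1$, and the two weight-$3$ coordinates are affine in the coordinates of $c_1,c_2$ with coefficients $\alpha_i,\beta_i$. So the equations can be solved over $\mathbb{Q}$ provided the determinant is nonzero. When the weight-$2$ part of the target is zero, I would instead take $a$ as a generator and $b$ in $\gamma_2$, using that $[x,\gamma_2]\,[y,\gamma_2]$ fills $\gamma_3$, and the rational scalings then make a single commutator suffice.
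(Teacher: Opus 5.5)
The paper gives no proof of this statement; it quotes Roman'kov. Several parts of your proposal are sound:
\begin{itemize}
\item Part (a), via ranks of alternating forms.
\item The upper bound $\lambda\le r$. Your layer-by-layer proof that $G'=[x_1,G]\cdots[x_r,G]$ works over both $\mathbb{Z}$ and $\mathbb{Q}$.
\item The exceptional case $N_{2,3}^{\mathbb{Q}}$, with one slip. When the weight-$2$ part vanishes, a fixed generator $a$ gives only the line $[a,\gamma_2]$. You should take $a=x_1^sx_2^t$ and use $[x_1^sx_2^t,w]=[x_1,w]^s[x_2,w]^t$ for $w\in\gamma_2$.
\end{itemize}

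The genuine gap is the lower bound $\lambda\ge r$, which is the whole content of (b) and (c). Your phrases ``should show'', ``supplies the extra rigidity'' and ``a congruence or divisibility obstruction'' are placeholders, not arguments. Moreover, the route you sketch cannot succeed. Your witness has a weight-$2$ part of (essentially) full rank. So that part equals $\sum_j\bar a_j\wedge\bar b_j$ with $\lceil r/2\rceil$ terms whose entries span $G/G'$. For odd $r$, add one commutator $[a,b]$ with $\bar a$ outside the span of the other entries and $b\in\gamma_2$. Write $a_j=\exp(A^{(j)})$ and $b_j=\exp(B^{(j)})$, with homogeneous components in $L_i=\gamma_i/\gamma_{i+1}$. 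For $k\ge3$, the weight-$k$ component of $\log\prod_j[a_j,b_j]$ is $\sum_j\bigl([\bar a_j,B^{(j)}_{k-1}]+[A^{(j)}_{k-1},\bar b_j]\bigr)$ plus terms that depend only on components of weight at most $k-2$. Since the $\bar a_j,\bar b_j$ span $L_1$ and $[L_1,L_{k-1}]=L_k$, this linear map is onto. Hence every higher component can be matched in turn, which is exactly the mechanism of your own upper-bound and $N_{2,3}^{\mathbb{Q}}$ arguments. So your target is a product of $\lceil r/2\rceil<r$ commutators in $N_{r,c}^{\mathbb{Q}}$ for every $c$. Any parameter count or rank argument on the corrections can only confirm this.

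What you need is a witness with a degenerate weight-$2$ part, for example $g\in\gamma_3$. Suppose $g=\prod_{j=1}^m[a_j,b_j]$; then $\sum_j\bar a_j\wedge\bar b_j=0$. Let $U=\langle\bar a_j,\bar b_j\rangle$ and let $K$ be the kernel of the map $\mathbb{Q}^{2m}\to U$ sending the $2m$ basis vectors to the entries. The kernel of $\Lambda^2(\mathbb{Q}^{2m})\to\Lambda^2(U)$ is $K\wedge\mathbb{Q}^{2m}$, whose elements have rank at most $2\dim K$. Comparing with the rank $2m$ of $\sum_j f_{2j-1}\wedge f_{2j}$ gives $\dim U\le m$. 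So for $m<r$ all $\bar a_j,\bar b_j$ lie in a hyperplane $H$. For $c=3$ this gives $\log g\in[\mathfrak h,\mathfrak h]\cap L_3=[H,L_2]$, where $\mathfrak h=H\oplus L_2\oplus L_3$.

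The remaining step is arithmetic: exhibit an element of $L_3$ lying outside $[H,L_2]$ for every rational hyperplane $H$. This is impossible for $r=2$, since $[w,x_1]^s[w,x_2]^t=[w,x_1^sx_2^t]$, and that is precisely the source of the exception. It must be done for $r\ge3$; the case $c>3$ then follows by passing to the quotient $N_{r,3}^{\mathbb{Q}}$. The cases $r=2$, $c\ge4$ (an analogous argument in weight $4$) and the integral group $N_{2,3}$ need separate treatment. None of this is in your proposal, so (b) and (c) remain unproved.
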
    
Here, the floor function, denoted by $[x]$, 
is the greatest integer less than or equal to $x$.

 To compute the number of automorphism orbits of $G$, we first require the following proposition, which establishes necessary and sufficient conditions for a map defined on the generators of $N_{r,c}$ to extend to an automorphism of $G$.

\begin{prop}\label{automorfismosNQ}
Let \( G = N_{r,c}^{\mathbb{Q}} = \langle x_1, x_2, \ldots, x_r \rangle^{\mathbb{Q}} \), where  
\( N_{r,c} = \langle x_1, x_2, \ldots, x_r \rangle \). Suppose that the elements  
\[
X_1 = x_1^{\alpha_{11}} \cdots x_r^{\alpha_{1r}} g_1,\ \dots,\ 
X_r = x_1^{\alpha_{r1}} \cdots x_r^{\alpha_{rr}} g_r,
\]
with \( g_1, \ldots, g_r \in G' \), are such that $
\{(\alpha_{11}, \ldots, \alpha_{1r}), \ldots, (\alpha_{r1}, \ldots, \alpha_{rr})\}
$
is a basis of the vector space \( \mathbb{Q}^r \) over \( \mathbb{Q} \). Then the map
 $$\varphi : x_1 \longmapsto X_1, x_2 \longmapsto X_2,
\ \cdots ,\
x_r\longmapsto X_r$$
extends to an automorphism of \( G \). Conversely, for every automorphism \( \varphi \) of \( G \), there exist elements  
\[
X_1 = x_1^{\alpha_{11}} \cdots x_r^{\alpha_{1r}} g_1,\ \dots,\ 
X_r = x_1^{\alpha_{r1}} \cdots x_r^{\alpha_{rr}} g_r,
\]
with \( g_1, \ldots, g_r \in G' \), such that  
$\{(\alpha_{11}, \ldots, \alpha_{1r}), \ldots, (\alpha_{r1}, \ldots, \alpha_{rr})\}$
is a basis of \( \mathbb{Q}^r \) and \( x_i^{\varphi} = X_i \) for \( i = 1, 2, \ldots, r \).
\end{prop}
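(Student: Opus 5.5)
The plan is to use two facts: $G=N_{r,c}^{\mathbb{Q}}$ is the free nilpotent group of class $c$ on $x_1,\dots,x_r$ in the category of nilpotent $\mathbb{Q}$-powered (uniquely divisible) groups, and an endomorphism of a nilpotent group is surjective as soon as it is surjective modulo the derived subgroup.

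\emph{Sufficiency.} Let $X_1,\dots,X_r$ be as in the statement.

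1. The $X_i$ lie in the $\mathbb{Q}$-powered nilpotent group $G$ of class at most $c$. Since $N_{r,c}$ is free nilpotent of class $c$, the assignment $x_i\mapsto X_i$ defines a homomorphism $N_{r,c}\to G$. By the universal property of the Mal'cev completion, it extends uniquely to a $\mathbb{Q}$-homomorphism $\varphi\colon G\to G$.

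2. Surjectivity. The abelianization $G/G'$ is the $\mathbb{Q}$-vector space $\mathbb{Q}^r$, with basis the images of the $x_i$. The map induced by $\varphi$ on $G/G'$ is given by the matrix $(\alpha_{ij})$, which is invertible by hypothesis. So $\varphi(G)G'=G$. Any subgroup $H$ of a nilpotent group with $HG'=G$ equals $G$ (standard: induct on the terms of the lower central series, using that $\gamma_k/\gamma_{k+1}$ is generated by commutators of elements of $H$ modulo $\gamma_{k+1}$). Hence $\varphi$ is onto.

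3. Injectivity. I will show $\varphi$ is injective layer by layer.
 - Each factor $\gamma_k(G)/\gamma_{k+1}(G)$ is a finite-dimensional $\mathbb{Q}$-vector space.
 - $\varphi$ induces a linear map on each factor, and this map is surjective because $\varphi$ is surjective.
 - A surjective linear endomorphism of a finite-dimensional space is bijective, so each induced map is injective.
 - Now take $g\neq 1$ in $\ker\varphi$, and let $k$ be such that $g\in\gamma_k\setminus\gamma_{k+1}$. Then $g$ maps to zero in $\gamma_k/\gamma_{k+1}$, a contradiction.

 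An alternative route is Hopficity: $G$ is the union of finitely generated nilpotent subgroups $\langle x_i^{1/m}\rangle$, and one can argue through dimension (Hirsch length) counts. The layer argument above is cleaner. This step is the main obstacle, together with checking that the completion's universal property applies to the $\mathbb{Q}$-powered target $G$.

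\emph{Necessity.} Let $\varphi$ be any automorphism of $G$.

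1. Set $X_i=x_i^{\varphi}$. Every element of $G$ can be written $x_1^{\beta_1}\cdots x_r^{\beta_r}g$ with $\beta_i\in\mathbb{Q}$ and $g\in G'$, using the Mal'cev basis of basic commutators (the weight-one ones come first). So each $X_i$ has the stated form.

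2. $G'$ is characteristic, so $\varphi$ induces an automorphism of $G/G'\cong\mathbb{Q}^r$. This automorphism is $\mathbb{Q}$-linear, because in a uniquely divisible group every automorphism commutes with rational powers. Its matrix is $(\alpha_{ij})$.

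3. Being an automorphism, that linear map is invertible, so the rows $(\alpha_{i1},\dots,\alpha_{ir})$ form a basis of $\mathbb{Q}^r$. This completes the argument.
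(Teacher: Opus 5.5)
Your proof is correct. For the sufficiency direction it takes a different route from the paper's.

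The paper argues by induction on $c$:
\begin{itemize}
\item it uses $G/Z(G)\cong N_{r,c-1}^{\mathbb{Q}}$ to obtain an induced automorphism of $G/Z(G)$;
\item it rewrites each $x_i$ modulo $Z(G)$ in terms of the $X_j$ and uses multilinearity of weight-$c$ commutators, which shows that the basic commutators of weight $c$ in $X_1,\dots,X_r$ span $Z(G)=\gamma_c(G)$;
\item a dimension count then shows they form a basis, so $\bar\varphi|_{Z(G)}$ is an automorphism, and a five-lemma style argument finishes.
\end{itemize}

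You instead obtain global surjectivity at once from the standard fact that $HG'=G$ forces $H=G$ in a nilpotent group. You then get injectivity because a surjective endomorphism of each finite-dimensional layer $\gamma_k(G)/\gamma_{k+1}(G)$ is bijective.

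In effect, the paper's treatment of the centre is your layer argument at $k=c$, with surjectivity checked by hand via basic commutators. Your version buys several things:
\begin{itemize}
\item it avoids the induction on $c$;
\item it avoids the identifications $Z(G)=\gamma_c(G)$ and $G/Z(G)\cong N_{r,c-1}^{\mathbb{Q}}$;
\item it avoids counting basic commutators;
\item it proves a more general statement: in any $\mathbb{Q}$-powered nilpotent group with finite-dimensional lower central factors, an endomorphism inducing an automorphism of $G/G'$ is an automorphism.
\end{itemize}
The paper's version, in exchange, makes the action on the centre explicit in terms of basic commutators in the $X_i$. That is the form it later uses, for example for the matrix on $Z(N_{2,4}^{\mathbb{Q}})$.

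Both arguments rest on the same standard facts you flag. These are the universal property of the Mal'cev completion for $\mathbb{Q}$-powered nilpotent targets, and the fact that the lower central terms of $G$ are $\mathbb{Q}$-powered (so $G/G'\cong\mathbb{Q}^r$ on the images of the $x_i$). The paper uses both implicitly. Your necessity direction is the same as the paper's.
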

\begin{proof}
    Note that the \(r\)-generated subgroup \(\langle X_1, \ldots, X_r \rangle\) of \(G\) has nilpotency class at most \(c\). Since \(N_{r,c}\) is free nilpotent, there exists a unique homomorphism
\[
\bar{\varphi} : N_{r,c} \longrightarrow \langle X_1, \ldots, X_r \rangle
\]
that completes the diagram

\[
\begin{tikzcd}
N_{r,c}
\arrow[dr, dashed, "\exists!\,\bar{\varphi}"]
& \\
\{x_1, \ldots, x_r\}
\arrow[u, hookrightarrow, "\iota"]
\arrow[r, "\varphi"]
& \langle X_1, \ldots, X_r \rangle
\end{tikzcd}
\]
Thus \(\bar{\varphi}\) extends to a homomorphism from \(G\) to \(G\). We prove that \(\varphi\) extends to an automorphism of \(G\) by induction on \(c\).
If \(c = 1\) then \(G\) is a \(\mathbb{Q}\)-vector space of dimension \(r\), and \(\varphi\) clearly extends to an automorphism of \(G\).
Assume \(c > 1\).
By induction the map \(\varphi\) induces an automorphism
\begin{align*}
\varphi_1 : G/Z(G) &\longrightarrow G/Z(G), \\
gZ(G) &\longmapsto g^{\bar{\varphi}}Z(G).
\end{align*}
Hence
$\langle X_1 Z(G), \ldots, X_r Z(G) \rangle^{\mathbb{Q}} = G/Z(G)$
and for each \(i = 1, 2, \ldots, r\) there exist \(q_{i1}, \ldots, q_{ir} \in \mathbb{Q}\) such that
\[
x_i Z(G) = X_1^{q_{i1}} \cdots X_r^{q_{ir}} y_i Z(G)
\qquad y_i \in G' \setminus Z(G).
\]
Now, let \(w(x_1, \ldots, x_r)\) be a basic commutator lying in \(Z(G)\). Then
\begin{align*}
w(x_1, \ldots, x_r)
&= w(X_1^{q_{11}} \cdots X_r^{q_{1r}} y_1, \ldots,
      X_1^{q_{r1}} \cdots X_r^{q_{rr}} y_r) \\
&= w(X_1^{q_{11}} \cdots X_r^{q_{1r}}, \ldots,
      X_1^{q_{r1}} \cdots X_r^{q_{rr}}),
\end{align*}
which belongs to
$\langle u \mid u \text{ is a basic commutator in } X_i,\ i = 1, \ldots, r \rangle^{\mathbb{Q}}.$

Consider the set
\[
U = \{\, u \mid u \text{ is a basic commutator in the } X_i \,\}.
\]
Then \(U\) is a generating set of the \(\mathbb{Q}\)-vector space \(Z(G)\).
Since \(|U| = \dim Z(G)\) it follows that \(U\) is a basis of \(Z(G)\).
Therefore
\[
\bar{\varphi} : w(x_1, \ldots, x_r) \longmapsto w(X_1, \ldots, X_r)
\]
extends to an isomorphism from \(Z(G)\) to \(Z(G)\), that is, to an automorphism of \(Z(G)\).

Note that
\begin{align*}
\bar{\varphi} : G &\longrightarrow G, \\
x_i &\longmapsto X_i, \qquad i = 1, 2, \ldots, r,
\end{align*}
is a well-defined homomorphism.
Let \(g \in G\).
Then \(g = hz\) with \(gZ(G) = hZ(G)\).
Since \(\varphi_1\) is surjective there exists \(kZ(G)\) such that
\[
(kZ(G))^{\varphi_1} = hZ(G),
\]
that is,
\[
k^{\bar{\varphi}} Z(G) = hZ(G).
\]
In particular, there exists \(z_1 \in Z(G)\) such that
\[
k^{\bar{\varphi}} z_1 = hz = g.
\]
Since \(z_1 \in Z(G)\) and \(\bar{\varphi}|_{Z(G)}\) is an automorphism,
there exists \(z_2 \in Z(G)\) such that \(z_1 = z_2^{\bar{\varphi}}\).
Hence,
\[
g = k^{\bar{\varphi}} z_1
  = k^{\bar{\varphi}} z_2^{\bar{\varphi}}
  = (k z_2)^{\bar{\varphi}}.
\]

Now suppose that \(g^{\bar{\varphi}} = 1\).
Then \(\bar{g}^{\varphi_1} = Z(G)\).
Hence
\[
\bar{g}^{\varphi_1} = g^{\bar{\varphi}} Z(G) = Z(G),
\]
which implies that \(g^{\bar{\varphi}} \in Z(G)\).
Thus \(g \in Z(G)\), and consequently \(g = 1\).

\vspace{0.3cm}

To conclude suppose that \(\varphi\) is an automorphism of \(G\).
Then
\begin{align*}
\bar{\varphi} : G/G' &\longrightarrow G/G', \\
gG' &\longmapsto g^{\varphi} G'
\end{align*}
is an automorphism.
Hence
\begin{align*}
(x_i G')^{\bar{\varphi}}
&= x_i^{\varphi} G' \\
&= x_1^{\alpha_{i1}} \cdots x_r^{\alpha_{ir}} G'.
\end{align*}
In particular for each \(i = 1, 2, \ldots, r\) there exists \(g_i \in G'\) such that
\[
x_i^{\varphi}
= x_1^{\alpha_{i1}} \cdots x_r^{\alpha_{ir}} g_i
= X_i g_i.
\]
That
\[
\{(\alpha_{11}, \ldots, \alpha_{1r}), \ldots, (\alpha_{r1}, \ldots, \alpha_{rr})\}
\]
is a basis of \(\mathbb{Q}^r\) follows from the fact that
\(\bar{\varphi}\) is an isomorphism of \(\mathbb{Q}\)-vector spaces.
\end{proof}
Below we present several applications of the preceding proposition. Let $\lambda(G)$ denote the width of the group $G$. For further details on the width of $G$, see \cite{larguracomutator}.

\begin{prop}\label{teorema4.1}
The group \( G = N_{r,2}^{\mathbb{Q}} \) has \( \lambda(G) + 2 \) automorphism orbits.
\end{prop}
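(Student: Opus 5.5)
We split $G=N_{r,2}^{\mathbb{Q}}$ into the elements outside $G'$ and the elements of $G'=Z(G)$, and count orbits separately on each piece. The aim is to show that $G\setminus G'$ is one orbit, the trivial element is one orbit, and the nontrivial elements of $G'$ form exactly $\lambda(G)=[r/2]$ orbits. This gives $1+1+\lambda(G)=\lambda(G)+2$.

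\emph{Non-central elements.} Let $g\in G\setminus G'$ and write $g=x_1^{\alpha_1}\cdots x_r^{\alpha_r}h$ with $h\in G'$ and $(\alpha_1,\dots,\alpha_r)\neq 0$. Extend $(\alpha_1,\dots,\alpha_r)$ to a basis of $\mathbb{Q}^r$. By Proposition~\ref{automorfismosNQ}, the map $x_1\mapsto g$, $x_i\mapsto x_1^{\alpha_{i1}}\cdots x_r^{\alpha_{ir}}$ for $i\ge2$ (using the remaining basis vectors) extends to an automorphism of $G$. So every $g\notin G'$ lies in the orbit of $x_1$. Since $G'$ is characteristic, this is exactly one orbit.

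\emph{Central elements.} Here $G'=Z(G)$ is the $\mathbb{Q}$-vector space $\Lambda^2 V$, where $V=G/G'\cong\mathbb{Q}^r$, and the commutator map is $[u,v]\mapsto u\wedge v$. This map is bilinear, because in class $2$ we have $[x^a,y^b]=[x,y]^{ab}$, and this extends to rational exponents in the completion. By Proposition~\ref{automorfismosNQ}, every element of $GL_r(\mathbb{Q})$ is induced on $V$ by some automorphism, and each automorphism acts on $G'$ through $\Lambda^2$ of its action on $V$. Hence the $\operatorname{Aut}(G)$-orbits on $G'$ are precisely the $GL_r(\mathbb{Q})$-orbits on $\Lambda^2\mathbb{Q}^r$, that is, the orbits of alternating bilinear forms (skew-symmetric matrices) under congruence.

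By the classical normal form for alternating forms, valid over any field, every nonzero element is equivalent to exactly one of
\[ e_1\wedge e_2+\cdots+e_{2k-1}\wedge e_{2k},\qquad 1\le k\le [r/2], \]
and $k$ is a congruence invariant (half the rank). This gives exactly $[r/2]$ nontrivial orbits in $G'$. It also shows that the representative with parameter $k$ is a product of $k$ commutators and of no fewer, since a product of $m$ commutators has rank at most $2m$. So the number of nontrivial central orbits is $[r/2]$, which equals $\lambda(G)$ by part (a) of the width proposition from \cite{larguracomutator}; our count also recovers that value directly.

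The main obstacle is justifying the identification $\operatorname{Aut}(G)$-orbits on $G'$ $=$ $GL_r(\mathbb{Q})$-orbits on $\Lambda^2\mathbb{Q}^r$. Two points are needed. First, the action on $G'$ depends only on the induced linear map on $V$: the correction terms $g_i\in G'$ in Proposition~\ref{automorfismosNQ} are central and drop out of commutators. Second, $G'$ has basis the basic commutators $[x_j,x_i]$, $j>i$, with no extra relations, so $G'\cong\Lambda^2\mathbb{Q}^r$ with dimension $\binom r2$ (the Mal'cev basis fact from \cite{verde}). Once this is settled, the rest is the standard rank classification of alternating forms.
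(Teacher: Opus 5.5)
Your proof is correct. The treatment of $G\setminus G'$ is the same as the paper's: extend the coefficient vector to a basis of $\mathbb{Q}^r$ and apply Proposition~\ref{automorfismosNQ}. For the central part you use a different key step.

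The paper works with commutator width. It takes $z\in Z(G)$ with $\lambda(z)=n$ and writes $z=[z_1,z_2]\cdots[z_{2n-1},z_{2n}]$. A hands-on rewriting shows that minimality forces the images $v_1,\dots,v_{2n}$ of the $z_i$ in $\mathbb{Q}^r$ to be linearly independent. An automorphism $x_i\mapsto z_i$ then moves $z$ to $[x_2,x_1]\cdots[x_{2n},x_{2n-1}]$. Orbits are told apart by width, which is visibly $\Aut(G)$-invariant.

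You instead identify $G'$ with $\Lambda^2\mathbb{Q}^r$ and note that $\Aut(G)$ acts through all of $GL_r(\mathbb{Q})$. You then cite the classical congruence normal form for alternating forms, with rank as the complete invariant. The paper's linear-independence step is essentially a by-hand proof of that normal form, and width equals half the rank, so the two arguments are close in substance.

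Your version has three advantages. Distinctness of the $[r/2]$ central orbits is immediate from rank. Your bound (a product of $m$ commutators has rank at most $2m$) shows the standard representative with $k$ terms has width exactly $k$; the paper leaves implicit that every width $1,\dots,\lambda(G)$ actually occurs. You also recover $\lambda(G)=[r/2]$ directly rather than citing Roman'kov.

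The paper's version stays in group-theoretic language. It states the count intrinsically in terms of $\lambda(G)$, without setting up the exterior-square identification or the fact that the basic commutators $[x_j,x_i]$ give a basis of $G'$.
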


\begin{proof}
Consider \( N_{r,2} = \langle x_1, x_2, \dots, x_r \rangle \). Thus, $G = \langle x_1, x_2, \dots, x_r \rangle^{\mathbb{Q}}$. 
Write \( r = 2k \) if \( r \) is even and \( r = 2k + 1 \) if it is odd.  
Note that
\[
G = \{1\} \,\dot{\cup}\, (Z(G)\setminus\{1\}) \,\dot{\cup}\, (G\setminus Z(G))
\]
and that \( Z(G) = G' \). Let
\[
g = x_1^{\alpha_{11}} \cdots x_r^{\alpha_{1r}} z \in G \setminus Z(G),
\]
where \( z \in Z(G) \) and \( (\alpha_{11},\ldots,\alpha_{1r}) \in \mathbb{Q}^r \setminus \{(0,\ldots,0)\} \).
Since the vector \( (\alpha_{11},\ldots,\alpha_{1r}) \) is nonzero, there exists a basis
\[
\{(\alpha_{11},\ldots,\alpha_{1r}),(\alpha_{21},\ldots,\alpha_{2r}),\ldots,(\alpha_{r1},\ldots,\alpha_{rr})\}
\]
of \( \mathbb{Q}^r \).
Define
\[
g_2 = x_1^{\alpha_{21}}\cdots x_r^{\alpha_{2r}},\ \ldots,\ 
g_r = x_1^{\alpha_{r1}}\cdots x_r^{\alpha_{rr}} \in G.
\]
By Proposition~\ref{automorfismosNQ}, the map
\begin{center}
  \begin{tabular}{>{\raggedleft}p{4cm} c p{5cm}}
   $\varphi: x_1 $&$ \longmapsto $&$ g$\\
   $x_2$ & $ \longmapsto$ &$ g_2$\\
   $x_3$ & $\longmapsto$ & $g_3$ \\ 
  $\vdots $ & $\vdots$ & $\vdots$ \\
  $x_r$ & $\longmapsto$ & $g_r$
\end{tabular}
\end{center}
extends to an automorphism of \( G \), and satisfies \( x_1^{\varphi} = g \).
Hence \( G \setminus Z(G) = x_1^{\Aut(G)} \).

We now prove \(Z(G)\setminus\{1\}\) is 
\[
[x_2,x_1]^{\Aut(G)}
\dot{\cup}
([x_2,x_1][x_4,x_3])^{\Aut(G)}
\dot{\cup} \cdots \dot{\cup}
([x_2,x_1][x_4,x_3]\cdots[x_{2k},x_{2k-1}])^{\Aut(G)}.
\]

Let \( z \in Z(G) \) with \( \lambda(z) = n \), where \( 1 \le n \le \lambda(G) \).
Then \( z \) can be written as
\[
z = \underbrace{[z_1,z_2][z_3,z_4]\cdots[z_{2n-1},z_{2n}]}_{n},
\]
where \( z_i = x_1^{\beta_{i1}}\cdots x_r^{\beta_{ir}} \) and
\( v_i = (\beta_{i1},\ldots,\beta_{ir}) \in \mathbb{Q}^r \).
We claim that the set
\[
\{v_1,v_2,\ldots,v_{2n}\}
\]
is linearly independent in the \( \mathbb{Q} \)-vector space \( \mathbb{Q}^r \).

Suppose, for a contradiction, that
\[
v_{2n} = \lambda_1 v_1 + \cdots + \lambda_{2n-1} v_{2n-1},
\qquad \lambda_i \in \mathbb{Q}.
\]
Then
\[
z_{2n} = z_1^{\lambda_1} z_2^{\lambda_2} \cdots z_{2n-1}^{\lambda_{2n-1}}.
\]
Assume, without loss of generality, that \( \lambda_i \neq 0 \) for
\( i = 1,2,\ldots,2n-1 \).
Thus,
\begin{align*}
z
&= [z_1,z_2]\cdots[z_{2n-1},z_1^{\lambda_1}\cdots z_{2n-1}^{\lambda_{2n-1}}] \\
\end{align*}
Hence,
\[
z = \underbrace{
[z_1 z_2^{\lambda_2/\lambda_1},\, z_2 z_{2n-1}^{-\lambda_1}]
\cdots
[z_{2n-3} z_{2n-2}^{\lambda_{2n-2}/\lambda_{2n-3}},\,
 z_{2n-2} z_{2n-1}^{-\lambda_{2n-3}}]
}_{n-1},
\]
which contradicts the assumption that \( \lambda(z) = n \).

Since the set \( \{v_1,\ldots,v_{2n}\} \) is linearly independent, we may add
vectors \( v_{2n+1},\ldots,v_r \) to complete it to a basis of \( \mathbb{Q}^r \).
Let
\[
z_j = x_1^{\beta_{j1}}\cdots x_r^{\beta_{jr}},
\qquad 2n+1 \le j \le r,
\]
with \( \beta_{jl} \in \mathbb{Q} \).
By Proposition~\ref{automorfismosNQ}, the map

\begin{center}
  \begin{tabular}{>{\raggedleft}p{4cm} c p{5cm}}
   $\varphi_{r}: x_1 $ & $ \longmapsto $ & $ z_1$\\
   $x_2$ & $ \longmapsto$ &$ z_2$\\
   $\vdots $ & $\vdots$ & $\vdots$ \\
   $x_{2n}$ & $ \longmapsto$ &$ z_{2n}$\\
  $\vdots $ & $\vdots$ & $\vdots$ \\
  $x_r$ & $\longmapsto$ & $z_r$
\end{tabular}
\end{center}

extends to an automorphism of \( G \), and satisfies
\[
([x_2,x_1][x_4,x_3]\cdots[x_{2n},x_{2n-1}])^{\varphi_r}
= [z_2,z_1][z_4,z_3]\cdots[z_{2n},z_{2n-1}].
\]
The result follows.
\end{proof}
\begin{prop}\label{propositionoo}
The group $G = N_{2,3}^{\mathbb{Q}}$ has $4$ automorphism orbits. 
\end{prop}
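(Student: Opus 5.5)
We propose to decompose $G = N_{2,3}^{\mathbb{Q}} = \langle x_1,x_2\rangle^{\mathbb{Q}}$ along its characteristic series $1 < \gamma_3(G) = Z(G) < \gamma_2(G) = G' < G$. Put $y=[x_2,x_1]$, $z_1=[y,x_1]$ and $z_2=[y,x_2]$. Then $G'/Z(G)$ is one-dimensional, spanned by $y$, and $Z(G)$ is two-dimensional, spanned by $z_1,z_2$. Since these three subgroups are characteristic, there are at least four orbits: $\{1\}$, $Z(G)\setminus\{1\}$, $G'\setminus Z(G)$ and $G\setminus G'$. The work is to show that each of the three non-trivial pieces is a single orbit. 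Throughout, Proposition~\ref{automorfismosNQ} supplies automorphisms: any assignment $x_i\mapsto X_i$ whose images have linearly independent abelianizations extends to an automorphism.

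\emph{Step 1: $G\setminus G'$ is one orbit.} Take $g\in G\setminus G'$, so $g=x_1^{\alpha_1}x_2^{\alpha_2}w$ with $w\in G'$ and $(\alpha_1,\alpha_2)\neq 0$. Complete $(\alpha_1,\alpha_2)$ to a basis of $\mathbb{Q}^2$ and send $x_1\mapsto g$, $x_2\mapsto$ a corresponding element. Exactly as in the first part of the proof of Proposition~\ref{teorema4.1}, this extends to an automorphism mapping $x_1$ to $g$.

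\emph{Step 2: $Z(G)\setminus\{1\}$ is one orbit.} An automorphism $\varphi$ with linear part $A\in GL_2(\mathbb{Q})$ acts on $G/G'$ by $A$. It acts on $G'/Z(G)$ by $\det A$, and on $Z(G)\cong (G/G')\otimes(G'/Z(G))$ by $\det A\cdot A$, since $[y,x_i]$ is bilinear modulo higher terms and the class is $3$. Because $GL_2(\mathbb{Q})$ is transitive on nonzero vectors, the group $\{\det A\cdot A\}$ is transitive on $Z(G)\setminus\{1\}$. So it suffices to take $\varphi$ to be the automorphism of Proposition~\ref{automorfismosNQ} with the chosen linear part and $g_i=1$.

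\emph{Step 3: $G'\setminus Z(G)$ is one orbit.} This is the main point. Let $h=y^{\beta}z$ with $\beta\neq 0$ and $z\in Z(G)$. First apply $\varphi$ with $A=\mathrm{diag}(\beta,1)$, so that $\det A=\beta$; this sends $y\mapsto y^{\beta}z'$ for some central $z'$. It therefore remains to show that every element $yz$ with $z\in Z(G)$ lies in the orbit of $y$. For this we use the inner automorphisms: conjugation by $x_1^{a}x_2^{b}$ sends
\[
y \longmapsto y[y,x_1^{a}x_2^{b}] = y\, z_1^{a}z_2^{b},
\]
using $[y,x_i^{a}]=z_i^{a}$, which holds because $[y,x_i]$ is central and the group is of class $3$. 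Letting $(a,b)$ range over $\mathbb{Q}^2$, this reaches every $yz$ with $z\in Z(G)$.

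The main obstacle we anticipate is verifying the induced actions in Step 2 and the identity $[y,x_1^{a}x_2^{b}]=z_1^{a}z_2^{b}$ in Step 3 inside the Mal'cev completion with rational exponents. This requires the standard fact that in a class-$3$ $\mathbb{Q}$-powered group, $[y,\cdot\,]$ restricted to $G'$ is a $\mathbb{Q}$-linear map into the centre. We would justify it through the Mal'cev coordinates, using the basic commutators $x_1,x_2,y,z_1,z_2$ as a Mal'cev basis. Combining Steps 1--3 with the four characteristic layers gives exactly $4$ orbits. This also agrees with $\lambda(N_{2,3}^{\mathbb{Q}})=1$: every element of $G'$ is a single commutator, consistent with the orbit description.
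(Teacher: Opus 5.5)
Your proof is correct, and it takes a genuinely different route from the paper for two of the three layers.

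\textbf{What is shared.} Both arguments use the same four characteristic layers. Both handle $G\setminus G'$ identically, by completing $(\alpha_1,\alpha_2)$ to a basis and invoking Proposition~\ref{automorfismosNQ}.

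\textbf{How the paper handles the other two layers.} It writes an element of $G'\setminus Z(G)$ as $[x_2,x_1]^{\alpha_1}[x_2,x_1,x_1]^{\alpha_2}[x_2,x_1,x_2]^{\alpha_3}$, and an element of $Z(G)\setminus\{1\}$ as $[x_2,x_1,x_1]^{\alpha_1}[x_2,x_1,x_2]^{\alpha_2}$. It then splits into cases according to which coordinates vanish. In each case it writes down an explicit assignment of $x_1,x_2$ that extends to an automorphism by Proposition~\ref{automorfismosNQ}. These assignments carry correction terms in $G'$ with rational exponents, such as $[x_2,x_1]^{-\alpha_3/\alpha_1+(\alpha_1-1)/2}$.

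\textbf{How your route differs.} You never compute such corrections. You read off the induced linear actions on the graded pieces: $\det A$ on $G'/Z(G)$ and $\det A\cdot A$ on $Z(G)$. You then absorb the central component of $y^{\beta}z$ using inner automorphisms, via the fact that $g\mapsto[y,g]$ maps $G$ onto $Z(G)$. Conjugation by $x_1^ax_2^b$ is an automorphism with identity linear part and $G'$-valued corrections. So your inner automorphisms do structurally what the paper's hand-chosen $g_i$ terms do.

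\textbf{What each approach buys.} Your version is uniform and case-free. It avoids checking collection-formula exponents in the Mal'cev completion, which is the error-prone part of the paper's explicit maps. It also shows clearly why $r=2$, $c=3$ works: the top layer is a determinant-twisted copy of the natural $GL_2(\mathbb{Q})$-module, on which the action is still transitive. The paper's route, in return, exhibits concrete automorphisms carrying each element to its representative.

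\textbf{One point to tighten in Step 2.} Transitivity of $\{\det A\cdot A\}$ does not follow merely from transitivity of $GL_2(\mathbb{Q})$ on nonzero vectors. The clean justification is that $SL_2(\mathbb{Q})$ is already transitive on $\mathbb{Q}^2\setminus\{0\}$, and for $A\in SL_2(\mathbb{Q})$ the twist $\det A\cdot A$ is just $A$.
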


\begin{proof}
Let $N_{2,3} = \langle x_1, x_2 \rangle$, so that
$
G = \langle x_1, x_2 \rangle^{\mathbb{Q}}.
$
A sequence of basic commutators of $G$ is
$$
x_1,\; x_2,\; [x_2, x_1],\; [x_2, x_1, x_1],\; [x_2, x_1, x_2].
$$

We will prove that the automorphism orbits of $G$ are
$$
1^{\Aut(G)}, \quad x_1^{\Aut(G)} = G \setminus G', \quad
[x_2, x_1]^{\Aut(G)} = G' \setminus Z(G),$$
$$[x_2, x_1, x_1]^{\Aut(G)} = Z(G) \setminus \{1\}.
$$
Clearly, $1^{\Aut(G)} = \{1\}$. Let
$$
g_1 = x_1^{\alpha_1} x_2^{\alpha_2} f \in G \setminus G',
$$
where $f \in G'$ and $(\alpha_1, \alpha_2) \in \mathbb{Q}^2 \setminus \{(0,0)\}$.
Since the vector $(\alpha_1, \alpha_2) \in \mathbb{Q}^2$ is nonzero, there exists a basis
$$
\{(\alpha_1, \alpha_2), (\alpha_3, \alpha_4)\}
$$
of $\mathbb{Q}^2$.
Define $g_2 = x_1^{\alpha_3} x_2^{\alpha_4}$.
By Proposition~\ref{automorfismosNQ}, the map
\begin{center}
  \begin{tabular}{>{\raggedleft}p{4cm} c p{5cm}}
   $\varphi: x_1 $&$ \longmapsto $&$ g_1$\\
   $x_2$ & $ \longmapsto$ &$ g_2$
  \end{tabular}
\end{center}
extends to an automorphism of $G$ such that $x_1^{\varphi} = g_1$.
Thus, $$G \setminus G' = x_1^{\Aut(G)}.$$

Let $h \in G' \setminus Z(G)$. Then $h$ can be written as
$$
h =
[x_2, x_1]^{\alpha_1}
[x_2, x_1, x_1]^{\alpha_2}
[x_2, x_1, x_2]^{\alpha_3},
$$
with $\alpha_1 \neq 0$.
Note that there are four cases to consider: $\alpha_1 \neq 0, \  \alpha_1 \neq 0 \text{ and } \alpha_2 \neq 0, \ \alpha_1 \neq 0 \text{ and } \alpha_3 \neq 0, \ \alpha_1 \neq 0 \text{ and } \alpha_2, \alpha_3 \neq 0$. By Proposition   \ref{automorfismosNQ} the following maps

\begin{center}
\renewcommand{\arraystretch}{1.4}
\begin{tabular}{m{6cm} m{6cm}}
$\displaystyle
\varphi_1:\;
\begin{aligned}
x_1 &\longmapsto x_1[x_2,x_1]^{\frac{-\alpha_{3}}{\alpha_{1}} + \frac{\alpha_{1}-1}{2}},\\
x_2 &\longmapsto x_2^{\alpha_{1}}
\end{aligned}
$
&
$\displaystyle
\varphi_2:\;
\begin{aligned}
x_1 &\longmapsto x_2^{-1}[x_2,x_1]^{-\frac{\alpha_{2}}{\alpha_{1}} + \frac{\alpha_{1}-1}{2}},\\
x_2 &\longmapsto x_1^{\alpha_{1}}
\end{aligned}
$
\\[2em]
$\displaystyle
\varphi_3:\;
\begin{aligned}
x_1 &\longmapsto x_1^{\alpha_{1}}[x_2,x_1]^{-\alpha_{3}},\\
x_2 &\longmapsto x_1^{\frac{\alpha_{2}}{\alpha_{3}} - \frac{\alpha_{1}(\alpha_{1}-1)}{2\alpha_{3}}}\,x_2
\end{aligned}
$
&
$\displaystyle
\varphi_4:\;
\begin{aligned}
x_1 &\longmapsto x_2[x_2,x_1]^{- \frac{\alpha_{1}(\alpha_{1} - 1)}{2\alpha_{1}}},\\
x_2 &\longmapsto x_1^{-\alpha_{1}}
\end{aligned}
$
\end{tabular}
\end{center}
extend to automorphisms of $G$ and they are such that $h^{\varphi_1} = [x_1, x_2]$ in each case.
Therefore,
$$
G' \setminus Z(G) = [x_2, x_1]^{\Aut(G)}.
$$

Let $z \in Z(G) \setminus \{1\}$. Then
$$
z =
[x_2, x_1, x_1]^{\alpha_{1}}
[x_2, x_1, x_2]^{\alpha_{2}},
$$
where $(\alpha_{1}, \alpha_{2}) \in \mathbb{Q}^2 \setminus \{(0,0)\}$.
We consider the possible cases, and by Proposition \ref{automorfismosNQ}, the following maps
\begin{center}
\renewcommand{\arraystretch}{1.3}
\begin{tabular}{p{4cm} p{4cm} p{4cm}}

$\displaystyle
\phi_1:\;
\begin{aligned}
x_1 &\longmapsto x_1 \\
x_2 &\longmapsto x_2^{\alpha_{1}}
\end{aligned}
$
&
$\displaystyle
\phi_2:\;
\begin{aligned}
x_1 &\longmapsto x_1^{\alpha_{2}}\\
x_2 &\longmapsto x_2
\end{aligned}
$
&
$\displaystyle
\phi_3:\;
\begin{aligned}
x_1 &\longmapsto x_1 x_2^{\frac{\alpha_{2}}{\alpha_{1}}}\\
x_2 &\longmapsto x_2^{\alpha_{2}}
\end{aligned}
$

\end{tabular}
\end{center}
extend to automorphisms of $G$ and they are such that $z^{\phi_i} = [x_2, x_1, x_1], i \in \{1,2,3\}$.
Hence,
$$
Z(G) \setminus \{1\} = [x_2, x_1, x_1]^{\Aut(G)}.
$$
Therefore, $\omega(G) = 4$. 
\end{proof}

Now we will prove that, except for the cases $N_{r,2}^{\mathbb{Q}}$ and $N_{2,3}^{\mathbb{Q}}$, the group $N_{r,c}^{\mathbb{Q}}$ has an infinite number of orbits under automorphisms.

\begin{prop}
The group $G = N_{2,c}^{\mathbb{Q}}$ does not have finitely many automorphism orbits whenever $c>3$.
\end{prop}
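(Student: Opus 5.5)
Let $G=N_{2,c}^{\mathbb{Q}}$ with $c\ge 4$. The plan is to find a characteristic section of $G$ on which $\Aut(G)$ acts through a finite-dimensional algebraic group, and then show by a dimension count that this action has infinitely many orbits. The natural choice is $V=\gamma_4(G)/\gamma_5(G)$ (for $c\ge4$ we may first pass to the characteristic quotient $G/\gamma_5(G)\cong N_{2,4}^{\mathbb{Q}}$). Every automorphism of $G$ preserves the lower central series. Hence an orbit in $\gamma_4(G)\setminus\gamma_5(G)$ maps onto an orbit of the induced action of $\Aut(G)$ on $V\setminus\{0\}$. So it suffices to show that $\Aut(G)$ has infinitely many orbits on $V$.

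Key step 1: identify the action. By Proposition \ref{automorfismosNQ}, an automorphism is determined by $x_i\mapsto x_1^{\alpha_{i1}}x_2^{\alpha_{i2}}g_i$ with $(\alpha_{ij})\in GL_2(\mathbb{Q})$ and $g_i\in G'$. The factor $\gamma_4/\gamma_5$ is the degree-$4$ component of the free Lie algebra on two generators, and it is spanned by images of basic commutators of weight $4$. Commutators of weight $4$ modulo $\gamma_5$ are multilinear in the images of their entries in $G/G'$. Therefore the $g_i$ act trivially on $V$, and the induced action factors through $GL_2(\mathbb{Q})$ acting linearly on $L_4(\mathbb{Q}^2)$. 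By Witt's formula, $\dim L_4=(2^4-2^2)/4=3$, with basis $[x_2,x_1,x_1,x_1]$, $[x_2,x_1,x_1,x_2]$, $[x_2,x_1,x_2,x_2]$. As a $GL_2$-module this is $S^2(\mathbb{Q}^2)\otimes\det$, because $L_4\cong [L_3,L_1]$ modulo relations: $L_3\cong \mathbb{Q}^2\otimes\det$ and $L_4\cong \mathrm{Sym}^2\otimes\det$. I would verify this by computing the action of diagonal and elementary matrices on the three basis vectors.

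Key step 2: show infinitely many orbits. $GL_2(\mathbb{Q})$ acting on binary quadratic forms twisted by $\det$ cannot have finitely many orbits on $\mathbb{Q}^3\setminus\{0\}$. One clean invariant is the following. For a quadratic form $q$ with discriminant $\Delta(q)$, the action $q\mapsto \det(A)\,(q\circ A)$ multiplies $\Delta$ by $\det(A)^2\cdot\det(A)^2=\det(A)^4$. Hence the class of $\Delta(q)$ in $\mathbb{Q}^\times/(\mathbb{Q}^\times)^4$ is an orbit invariant on vectors with $\Delta\neq0$ (and $\Delta=0$ is itself invariant). Realize forms $x^2-dy^2$ for infinitely many squarefree $d$; these have pairwise distinct invariants modulo fourth powers. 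This gives infinitely many orbits on $V$, and lifting the corresponding commutator combinations to $\gamma_4(G)$ gives infinitely many $\Aut(G)$-orbits on $G$.

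The main obstacle is step 1: pinning down exactly how $GL_2$ acts on $\gamma_4/\gamma_5$, including the twist. If the identification as $\mathrm{Sym}^2\otimes\det$ is awkward, a fallback avoids it. Write the action as a polynomial map $GL_2(\mathbb{Q})\to GL_3(\mathbb{Q})$ and directly exhibit a relative invariant, a polynomial $\Delta$ on $V$ with $\Delta(Av)=\chi(A)\Delta(v)$, by explicit computation on generators of $GL_2$. Any nonconstant quadratic relative invariant whose character has values in a subgroup of $\mathbb{Q}^\times$ of infinite index would then give the same conclusion. As a sanity check, I would confirm that the analogous computation on $\gamma_3/\gamma_4\cong\mathbb{Q}^2\otimes\det$ gives a single nonzero orbit, which is consistent with Proposition \ref{propositionoo}.
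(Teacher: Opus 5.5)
Your argument is correct. It shares the paper's skeleton: pass to the top layer $\gamma_4/\gamma_5$, note that $\Aut(G)$ acts there linearly through $GL_2(\mathbb{Q})$, and exploit that certain rationals are not squares. The key step, however, is organized differently.

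The paper reduces $c>4$ to $c=4$ by repeatedly quotienting by the centre. It then writes down an explicit $3\times 3$ matrix for the action on $Z(N_{2,4}^{\mathbb{Q}})$ and applies pigeonhole to the elements $[x_2,x_1,x_1,x_2]^p$, $p$ prime, to force $D^2=q/p$.

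You instead work on $\gamma_4(G)/\gamma_5(G)$ for all $c\ge 4$ at once, identify it as $\det\otimes S^2(\mathbb{Q}^2)$, and use the discriminant as a relative invariant with character $\det^4$. This gives an explicit invariant and an explicit infinite family of pairwise inequivalent elements. It also subsumes the paper's argument: $[x_2,x_1,x_1,x_2]^p$ corresponds to the form $p\,xy$, of discriminant $p^2$, and $p^2/q^2$ is not a fourth power.

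Your route is also insensitive to computational slips. Write $e_1,e_2,e_3$ for the three basic commutators of weight $4$ and $x_1\mapsto x_1^a x_2^b g_1$, $x_2\mapsto x_1^c x_2^d g_2$. Additively on $\gamma_4/\gamma_5$, the action is $e_1\mapsto D(a^2e_1+2ab\,e_2+b^2e_3)$, $e_2\mapsto D(ac\,e_1+(ad+bc)e_2+bd\,e_3)$, $e_3\mapsto D(c^2e_1+2cd\,e_2+d^2e_3)$. This is exactly your $\det\otimes S^2$, using $[x_2,x_1,x_2,x_1]\equiv[x_2,x_1,x_1,x_2]$ modulo $\gamma_5$. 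The matrix displayed in the paper omits the entries $2abD$ and $2cdD$ and has $D^2$ where $(ad+bc)D$ belongs; for example, swapping $x_1$ and $x_2$ inverts $e_2$, which the paper's matrix does not give. The paper's conclusion still holds, since $ac=bd=0$ forces $\pm pD^2=q$.

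One correction to your framing: the opening phrase ``by a dimension count'' cannot be made to work. Here $\dim GL_2=4>3$, and over $\overline{\mathbb{Q}}$ there are only three orbits on $\det\otimes S^2$. The infinitude is purely arithmetic, which is precisely what your discriminant invariant detects.
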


\begin{proof}
We prove the case $N_{2,4}^{\mathbb{Q}}$, and the general case follows by induction.  
Let $N_{2,4} = \langle x_1, x_2 \rangle$. Thus,
$G = N_{2,4}^{\mathbb{Q}} = \langle x_1, x_2 \rangle^{\mathbb{Q}}$.
We order the basic commutators of weight $1$ as $x_1 < x_2$. Then the central basic commutators are
\[
[x_2, x_1, x_1, x_1], \ [x_2, x_1, x_1, x_2], \ \text{and } [x_2, x_1, x_2, x_2].
\]
Consider the map
\begin{center}
\renewcommand{\arraystretch}{1.5}
\begin{tabular}{r c l}
$\varphi: x_1$ & $\longmapsto$ & $x_1^{a}\, x_2^{b} g_1$ \\
$x_2$ & $\longmapsto$ & $x_1^{c}\, x_2^{d} g_2$
\end{tabular}
\end{center}
where $a,b,c,d \in \mathbb{Q}$ and $g_1, g_2 \in G'$. By Proposition~\ref{automorfismosNQ}, this map extends to an automorphism of $G$.

Let
\[
[x_2,x_1,x_1,x_1]^u \ [x_2,x_1,x_1,x_2]^v \ [x_2,x_1,x_2,x_2]^w \in Z(G),
\]
with $(u,v,w) \in \mathbb{Q}^3 \setminus \{(0,0,0)\}$.  
The action of $\varphi$ on the $3$-dimensional $\mathbb{Q}$-vector space $Z(G)$ is given by the matrix
\[
\begin{pmatrix}
a^2D & 0 & b^2D \\
acD & D^2 & bdD \\
c^2D & 0 & d^2D
\end{pmatrix}, \text{ where } D = ad - bc.
\]
In particular, the element $[x_2,x_1,x_1,x_2]^v$ is mapped by $\varphi$ to
\[
[x_2,x_1,x_1,x_1]^{acDv}
[x_2,x_1,x_1,x_2]^{D^2v}
[x_2,x_1,x_2,x_2]^{bdDv}.
\]

Suppose, by contradiction, that $G$ has finitely many automorphism orbits. Since there are infinitely many prime numbers and $\omega(G) < \infty$, there exist distinct primes $p,q$ and $\varphi \in \operatorname{Aut}(G)$ such that 

\[
([x_2,x_1,x_1,x_2]^p)^{\varphi}
=
[x_2,x_1,x_1,x_1]^{acDp}
[x_2,x_1,x_1,x_2]^{D^2p}
[x_2,x_1,x_2,x_2]^{bdDp}
= \]
\[
= [x_2,x_1,x_1,x_2]^q.
\]
Since $D \neq 0$ and the unique expression of elements in the basis 
\[\{[x_2,x_1,x_1,x_1]
[x_2,x_1,x_1,x_2]
[x_2,x_1,x_2,x_2]\}\]
of the $\mathbb{Q}$-vector space $Z(G)$ we may assume that either $a=c=0$ or $b=d=0$, which yields
\[
([x_2,x_1,x_1,x_2]^p)^{\varphi}
=
[x_2,x_1,x_1,x_2]^{D^2p}
=
[x_2,x_1,x_1,x_2]^q.
\]
Hence $D^2p = q$, that is, 
$D^2 = \frac{q}{p}$,
which is impossible for a $D$ in $\mathbb{Q}$. This contradiction shows that $G$ does not have finitely many automorphism orbits.

Since $N_{r,c-1} \simeq N_{r,c}/\gamma_c(N_{r,c})$, in particular we have
\[
N_{2,4}^{\mathbb{Q}} \simeq \frac{N_{2,5}^{\mathbb{Q}}}{Z(N_{2,5}^{\mathbb{Q}})}.
\]
As the group $N_{2,4}^{\mathbb{Q}}$ does not have finitely many automorphism orbits, the same holds for $N_{2,5}^{\mathbb{Q}}$. By induction, we conclude that the group $N_{2,c}^{\mathbb{Q}}$ does not have finitely many automorphism orbits for all $c \geq 4$.
\end{proof}
\begin{prop}\label{theorem4.4}
If $r \geq 3$ the group $N_{r,3}^{\mathbb{Q}}$ does not have finitely many automorphism orbits.
\end{prop}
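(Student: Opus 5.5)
We need a plan showing that $N_{r,3}^{\mathbb{Q}}$ with $r\ge 3$ has infinitely many automorphism orbits. The approach is a dimension count on the center: the automorphism group acts on $Z(G)=\gamma_3(G)$ (a $\mathbb{Q}$-space) through a finite-dimensional algebraic group of too small dimension.

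We first reduce to the action on $Z(G)$. Here $G/G'\cong\mathbb{Q}^r$, $G'/\gamma_3\cong\Lambda^2\mathbb{Q}^r$, and $Z(G)=\gamma_3(G)\cong L_3(r)$, the degree-3 part of the free Lie algebra. Its dimension is $(r^3-r)/3$, which equals $8$ for $r=3$. By Proposition~\ref{automorfismosNQ}, every automorphism $\varphi$ is determined by a matrix $A=(\alpha_{ij})\in GL_r(\mathbb{Q})$ together with elements $g_i\in G'$. On $\gamma_3(G)$ the map $\varphi$ acts by a basic commutator $w(x_1,\dots,x_r)\mapsto w(X_1,\dots,X_r)$. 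Modulo higher terms, and there are none since the class is $3$, the contributions of $g_i$ vanish: a weight-3 commutator with an entry in $G'$ has weight at least $4$, hence is trivial. So $\varphi|_{Z(G)}$ depends only on $A$, and equals the natural representation $\rho(A)$ of $GL_r$ on $L_3(\mathbb{Q}^r)$.

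Next we count orbits. The orbits of $\mathrm{Aut}(G)$ on $Z(G)\setminus\{1\}$ are exactly the orbits of $\rho(GL_r(\mathbb{Q}))$ on $L_3(\mathbb{Q}^r)\setminus\{0\}$. Here $\rho(GL_r)$ is the image of an $r^2$-dimensional group, and scalars act by $\lambda^3$. The orbit map $GL_r\to L_3$, $A\mapsto \rho(A)v$, is polynomial. Each orbit is the set of rational points of the image of a polynomial map from an $r^2$-dimensional variety, so it lies in a constructible set of dimension at most $r^2$.

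For $r=3$ we have $r^2=9>8$, so dimension alone fails. We therefore sharpen the count by dimension of orbits, or we use an invariant as in the $N_{2,4}$ case. Since $\dim GL_3=9\geq 8$, a generic orbit could be open, and a finer argument is needed. I would look for a relative invariant: a polynomial $F$ on $L_3(\mathbb{Q}^3)$ with $F(\rho(A)v)=\det(A)^kF(v)$ and $k$ even, or $\deg F$ suitably related. For $r=3$, $L_3\cong \mathfrak{sl}$-type module $\cong$ the adjoint-type representation $\mathbb{Q}^3\otimes\Lambda^2\mathbb{Q}^3$ minus $\Lambda^3$, which is roughly $V\otimes V^*\otimes\det$ traceless, i.e.\ $\mathfrak{sl}_3\otimes\det$. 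On $\mathfrak{sl}_3$, conjugation preserves the characteristic polynomial. Hence, for $v\in\mathfrak{sl}_3$, $\rho(A)v=\det(A)\,AvA^{-1}$, and the ratio $\det(v)^2/\mathrm{tr}(v^2)^3$ is invariant. This ratio takes infinitely many rational values, which gives infinitely many orbits. For $r\ge4$ the dimension count $(r^3-r)/3>r^2$ already suffices. Alternatively, one can reduce to $r=3$ by a quotient of $G$ onto $N_{3,3}^{\mathbb{Q}}$, but quotients by $\langle x_4,\dots\rangle$-normal closure are not characteristic. So for $r\ge4$ I would use the dimension argument: a finite union of images of polynomial maps from $r^2$-dimensional sets cannot cover $\mathbb{Q}^{(r^3-r)/3}$, since these images are contained in proper Zariski-closed sets and $\mathbb{Q}^N$ is not a finite union of proper closed sets.

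The main obstacle is the $r=3$ identification $L_3(\mathbb{Q}^3)\cong\mathfrak{sl}_3\otimes\det$ and the verification that the invariant takes infinitely many values. I would check this by writing $[x_j,x_i,x_k]$ as $(x_i\wedge x_j)\otimes x_k$ and identifying $\Lambda^2V\cong V^*\otimes\det$, where the Jacobi relation removes the trace part.
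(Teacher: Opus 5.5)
Your proposal is correct, and it takes a genuinely different route from the paper.

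\textbf{The paper's route.} The paper works in coordinates. It fixes the explicit family $[x_2,x_1,x_1]^p[x_3,x_2,x_3]$, with $p$ prime, and writes an arbitrary automorphism via Proposition~\ref{automorfismosNQ}. It expands the image in the basis of weight-$3$ basic commutators. A case analysis of the resulting polynomial system then shows that a solution would force $p$, $pq$ or $q/p$ to be a square in $\mathbb{Q}$.

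\textbf{Your route.} You pass to the $GL_r(\mathbb{Q})$-module $Z(G)=\gamma_3(G)\cong L_3(\mathbb{Q}^r)$ and argue invariant-theoretically. For $r\ge 4$ you use a Zariski dimension count, $r^2<(r^3-r)/3$. For $r=3$ you use the model $\mathfrak{sl}_3\otimes\det$ with the degree-zero invariant $\det(v)^2/\mathrm{tr}(v^2)^3$.

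\textbf{The step you flag goes through.} Under $\Lambda^2V\cong V^*\otimes\det$, the Jacobi element $(e_1\wedge e_2)\otimes e_3+(e_2\wedge e_3)\otimes e_1+(e_3\wedge e_1)\otimes e_2$ is sent to the identity matrix. Hence $L_3(\mathbb{Q}^3)\cong(\mathfrak{gl}_3/\mathbb{Q}I)\otimes\det$. On $v=\mathrm{diag}(1,t,-1-t)$ the invariant equals $t^2(1+t)^2/\bigl(8(1+t+t^2)^3\bigr)$, which is non-constant, so it takes infinitely many rational values.

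\textbf{What each approach buys.} Your argument avoids the long elimination, does not rely on square classes, and treats $r\ge 4$ uniformly. The paper's system only involves the block $(\alpha_{ij})_{i,j\le 3}$. Its Case~2 concludes ``$\varphi$ is not an automorphism'' from a singularity of that block, which is valid only when $r=3$; your count needs nothing of the kind. The paper's approach, in turn, produces an explicit infinite family of pairwise inequivalent central elements using only elementary tools.

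\textbf{How the two fit together.} In your model, $[x_2,x_1,x_1]^p[x_3,x_2,x_3]$ becomes $-(pE_{13}+E_{31})$, on which your invariant vanishes because $\det v=0$. These elements are separated instead by the square class of $\mathrm{tr}(v^2)=2p$, which $\rho(A)$ multiplies by $\det(A)^2$. That is exactly the arithmetic obstruction the paper's computation detects.

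\textbf{A small wording point.} An orbit is the image of $GL_r(\mathbb{Q})$. That image is contained in, but not necessarily equal to, the rational points of the image variety. Your argument only uses the containment.
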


\begin{proof}

Let $N_{r,3} = \langle x_1, x_2, x_3, \cdots, x_r \rangle$ and 
$G = \langle x_1, x_2, x_3, \cdots , x_r \rangle^{\mathbb{Q}}$.
We order the basic commutators of weight $1$ as $x_1 < x_2 < x_3 < \dots < x_{r}$. Therefore, the basic central commutators of $G$ are
$$[x_2, x_1, x_1],\ [x_2, x_1, x_2],\ [x_2, x_1, x_3],\ 
[x_3, x_1, x_1],\ [x_3, x_1, x_2],\ [x_3, x_1, x_3],$$
$$[x_3, x_2, x_2],\ [x_3, x_2, x_3], \dots, [x_{r}, x_{r-1}, x_{r}].$$
Suppose, by contradiction, that $G$ has finitely many automorphism orbits.  
Consider the element
\[
[x_2, x_1, x_1]^p [x_3, x_2, x_3] \in Z(G),
\]
where $p$ is a prime number. Since there are infinitely many primes, there must exist an automorphism $\varphi$ of $G$ such that
\[
([x_2, x_1, x_1]^p [x_3, x_2, x_3])^{\varphi}
=
[x_2, x_1, x_1]^q [x_3, x_2, x_3],
\]
where $p$ and $q$ are distinct primes.
Since $\varphi$ is an automorphism of $G$, by Proposition~\ref{automorfismosNQ}, $\varphi$ induces a map
\begin{center}
\renewcommand{\arraystretch}{1.5}
\begin{tabular}{r c l}
$\varphi : x_1$ & $\longmapsto$ & $x_1^{\alpha_{11}} x_2^{\alpha_{12}} x_3^{\alpha_{13}} \dots x_r^{\alpha_{1r}} f_1$ \\
$x_2$ & $\longmapsto$ & $x_1^{\alpha_{21}} x_2^{\alpha_{22}} x_3^{\alpha_{23}} \dots x_r^{\alpha_{2r}}f_2$ \\
$x_3$ & $\longmapsto$ & $x_1^{\alpha_{31}} x_2^{\alpha_{32}} x_3^{\alpha_{33}} \dots x_r^{\alpha_{3r}} f_3$\\
$\vdots$\\
$x_r$ & $\longmapsto$ & $x_1^{\alpha_{r1}} x_2^{\alpha_{r2}} x_3^{\alpha_{r3}} \dots x_r^{\alpha_{rr}} f_r$
\end{tabular}
\end{center}
such that
\[
\{(\alpha_{11},\alpha_{12}, \dots,\alpha_{1r}),(\alpha_{21},\alpha_{22},\dots, \alpha_{2r}),(\alpha_{31},\alpha_{32}, \dots, \alpha_{3r}) \dots (\alpha_{r1}, \alpha_{r2} \dots \alpha_{rr})\}
\]
is a basis of $\mathbb{Q}^r$ and $f_1,f_2,f_3, \dots, f_r \in G'$. Note that,
\[([x_2,x_1,x_1]^p [x_3,x_2,x_3])^{\varphi}
= z_1 z_2 z_3 z_4 z_5 z_6 z_7 z_8 \dots z_k
= [x_2,x_1,x_1]^q [x_3,x_2,x_3],\]
where $k$ denotes the number of basic commutators of weight $3$ and \\
$$\begin{aligned}
z_1 &= [x_2,x_1,x_1]^{p\alpha_{11}X_1 + \alpha_{31}Y_1},\\
z_2 &= [x_2,x_1,x_2]^{p\alpha_{12}X_1 + \alpha_{32}Y_1},\\
z_3 &= [x_2,x_1,x_3]^{p\alpha_{13}X_1 + \alpha_{33}Y_1
      - p\alpha_{11}X_3 - \alpha_{31}Y_3},\\
z_4 &= [x_3,x_1,x_1]^{p\alpha_{11}X_2 + \alpha_{31}Y_2},\\
z_5 &= [x_3,x_1,x_2]^{p\alpha_{12}X_2 + \alpha_{32}Y_2
      + p\alpha_{11}X_3 + \alpha_{31}Y_3},\\
z_6 &= [x_3,x_1,x_3]^{p\alpha_{13}X_2 + \alpha_{33}Y_2},\\
z_7 &= [x_3,x_2,x_2]^{p\alpha_{12}X_3 + \alpha_{32}Y_3},\\
z_8 &= [x_3,x_2,x_3]^{p\alpha_{13}X_3 + \alpha_{33}Y_3}\\
\vdots\\
z_k &= [x_r, x_{r-1}, x_r]^{p\alpha_{1r}X_l + \alpha_{3r}Y_l}
\end{aligned}$$ \\
where $X_1, X_2, X_3, Y_1, Y_2$ and $Y_3$ are give by
$$\begin{array}{c c c}
X_1 = \alpha_{22}\alpha_{11}-\alpha_{12}\alpha_{21} & 
X_2 = \alpha_{23}\alpha_{11}-\alpha_{21}\alpha_{13} & 
X_3 = \alpha_{23}\alpha_{12}-\alpha_{22}\alpha_{13} \\[0.8em]
Y_1 = \alpha_{32}\alpha_{21}-\alpha_{22}\alpha_{31} & 
Y_2 = \alpha_{33}\alpha_{21}-\alpha_{23}\alpha_{31} & 
Y_3 = \alpha_{33}\alpha_{22}-\alpha_{23}\alpha_{32} 
\end{array}$$
As the following set 
$$\{[x_2, x_1, x_1],\ [x_2, x_1, x_2],\ [x_2, x_1, x_3],\ 
[x_3, x_1, x_1],\ [x_3, x_1, x_2],\ [x_3, x_1, x_3],$$
$$[x_3, x_2, x_2],\ [x_3, x_2, x_3], \dots, [x_{r}, x_{r-1}, x_{r}]\},$$
is a base of the $\mathbb{Q}$-vector space $Z(G)$, we obtain the following nonlinear system

\vspace{5.0mm}

$$R \ \  
\left\{ \begin{array}{l@{\quad}r} 
{p\alpha_{11}X_1 + \alpha_{31}Y_1}= q &\text{(1)}\\
{p\alpha_{12}X_1 + \alpha_{32}Y_1} = 0 & \text{(2)}\\
{p\alpha_{13}X_1 + \alpha_{33}Y_1
      - p\alpha_{11}X_3 - \alpha_{31}Y_3} = 0 & \text{(3)}\\
{p\alpha_{11}X_2 + \alpha_{31}Y_2}=0 &\text{(4)}\\
{p\alpha_{12}X_2 + \alpha_{32}Y_2
      + p\alpha_{11}X_3 + \alpha_{31}Y_3}=0&\text{(5)}\\
{p\alpha_{13}X_2 + \alpha_{33}Y_2} = 0 & \text{(6)}\\
{p\alpha_{12}X_3 + \alpha_{32}Y_3} = 0 &\text{(7)}\\
{p\alpha_{13}X_3 + \alpha_{33}Y_3} = 1 &\text{(8)} \\
\vdots \\
{p\alpha_{1r}X_l + \alpha_{3r}Y_l} = 0 & \text{($k$)}
\end{array} \right.$$\\
Consider the subsystem $S$ consisting of the first eight equations of $R$.
$$S \ \  
\left\{ \begin{array}{l@{\quad}r} 
{p\alpha_{11}X_1 + \alpha_{31}Y_1}= q &\text{(1)}\\
{p\alpha_{12}X_1 + \alpha_{32}Y_1} = 0 & \text{(2)}\\
{p\alpha_{13}X_1 + \alpha_{33}Y_1
      - p\alpha_{11}X_3 - \alpha_{31}Y_3} = 0 & \text{(3)}\\
{p\alpha_{11}X_2 + \alpha_{31}Y_2}=0 &\text{(4)}\\
{p\alpha_{12}X_2 + \alpha_{32}Y_2
      + p\alpha_{11}X_3 + \alpha_{31}Y_3}=0&\text{(5)}\\
{p\alpha_{13}X_2 + \alpha_{33}Y_2} = 0 & \text{(6)}\\
{p\alpha_{12}X_3 + \alpha_{32}Y_3} = 0 &\text{(7)}\\
{p\alpha_{13}X_3 + \alpha_{33}Y_3} = 1 &\text{(8)}\end{array} \right.$$\\
Note that, if \( R \) admits a solution, then this subsystem also admits a solution. 
We will show that \( R \) has no solution in \( \mathbb{Q}^9 \) by proving that the system $S$ admits no solution. 
To this end, we begin by analyzing the subsystem \( S_1 \), formed by equations \( (4) \) and \( (6) \).
\begin{center}
$ S_1 \ \  \left\{ \begin{array}{l@{\quad}r}

       {p \alpha_{11}X_2 + \alpha_{31}Y_2 = 0} & \text{(4)}\\
               
                 {p \alpha_{13}X_2 +  \alpha_{33}Y_2 = 0.} & \text{(6)}\\
\end{array} \right.$           
\end{center}
Let $M = \alpha_{11}\alpha_{33} - \alpha_{31}\alpha_{13}$. We have two cases: $M \neq 0$ or $M = 0$.\\
To prove the case $M \neq 0$, we first show that  $D = \alpha_{11}\alpha_{32} - \alpha_{12}\alpha_{31} \neq 0$. To this end, we consider the subsystem consisting of equations   $(1)$ and $(2)$:
\begin{center}
$ \left\{ \begin{array}{l@{\quad}r} 

 {p \alpha_{11}X_1 + \alpha_{31}Y_1 = q }& \text{(1)}\\
    {p \alpha_{12}X_1 + \alpha_{32}Y_2 = 0} & \text{(2)}
\end{array} \right.$  
\end{center}
Suppose, for the sake of contradiction, that $D = 0$. Note that $D$ is the determinant of the matrix
\[
\begin{pmatrix}
\alpha_{11} & \alpha_{12} \\
\alpha_{31} & \alpha_{32}
\end{pmatrix} 
\]
We claim that the vector $(\alpha_{11}, \alpha_{12}) \in \mathbb{Q}^2 \setminus \{(0,0)\}$. Indeed, if $\alpha_{11} = \alpha_{12} = 0$, then from equations $(1)$ and $(2)$ we obtain $\alpha_{32} = 0$. Thus, equation $(5)$ becomes $\alpha_{31}\alpha_{33}\alpha_{22} = 0$. We conclude that $\alpha_{33} = 0$, since from equation $(1)$ we have $\alpha_{22} \neq 0$ and $\alpha_{31} \neq 0$.
Note that from $(1)$ we have $\alpha_{22} = -\frac{q}{\alpha_{31}^2}$, and from $(8)$ we have $\alpha_{22} = -\frac{1}{p\alpha_{13}^2}$. Equating these expressions, we obtain  $qp = \frac{\alpha_{31}^2}{\alpha_{13}^2}$, which is impossible in $\mathbb{Q}$.
\vspace{0.3cm}
Hence, there exists $r \in \mathbb{Q} \setminus \{0\}$ such that 
$(\alpha_{11}, \alpha_{12}) = r(\alpha_{31}, \alpha_{32})$, that is, 
$\alpha_{11} = r\alpha_{31}$ and $\alpha_{12} = r\alpha_{32}$. Substituting these equalities into the equations $(1)$ and $(2)$, we obtain, respectively,
$$\underbrace{(\alpha_{22} \alpha_{31} - \alpha_{32} \alpha_{21})}_{\neq 0}\underbrace{(pr^2 \alpha_{31} - \alpha_{31})}_{\neq 0} = q \text{ and }$$
$$\underbrace{(\alpha_{22} \alpha_{31} - \alpha_{32}\alpha_{21})}_{\neq 0}(pr^2 \alpha_{32} - \alpha_{32})=0.$$
Thus, if $\alpha_{32} \neq 0$, then $pr^2 \alpha_{32} = \alpha_{32}$, and therefore $pr^2 = 1$. Hence, $r^2 = \frac{1}{p}$, which is impossible in $\mathbb{Q}$. It follows that $\alpha_{32} = 0$, which implies $\alpha_{12} = 0$, since $(\alpha_{11}, \alpha_{12}) = r(\alpha_{31}, \alpha_{32})$, from which we conclude that $\alpha_{11} \neq 0$ and $\alpha_{31} \neq 0$.
In a similar way, considering equations $(7)$ and $(8)$, it follows that $\alpha_{13} \neq 0$ and $\alpha_{33} \neq 0$. Since $\alpha_{12} = \alpha_{32} = 0$, it is immediate from equations $(1)$ and $(8)$ that $\alpha_{22} \neq 0$.
Therefore, the following equations must be satisfied simultaneously.
\vspace{0.3cm}

{\footnotesize
$$\left\{ \begin{array}{l@{\quad}r} 
{p \alpha_{11}^2\alpha_{22}  - \alpha_{31}^2 \alpha_{22}  = q } & \text{(1')}\\
          { p\alpha_{11}\alpha_{13} -\alpha_{33}\alpha_{31} =0} & \text{(3')}\\
       {p \alpha_{11}(\alpha_{23} \alpha_{11} -\alpha_{21} \alpha_{13}) + \alpha_{31}(\alpha_{33} \alpha_{21} -\alpha_{23} \alpha_{31}) = 0} & \text{(4')}\\
               { p \alpha_{11}(- \alpha_{22} \alpha_{13}) + 
                \alpha_{31} (\alpha_{33} \alpha_{22} ) = 0} & \text{(5')}\\
                {p \alpha_{13} (\alpha_{23} \alpha_{11} -\alpha_{21} \alpha_{13}) +  \alpha_{33}(\alpha_{33} \alpha_{21} -\alpha_{23} \alpha_{31}) = 0} & \text{(6')}\\        {-p \alpha_{13}^2 \alpha_{22} + \alpha_{33}^2 \alpha_{22}  = 1} & \text{(8')} \\
\end{array} \right.$$}\\
From equation $(1')$, we have
$\alpha_{22} (p \alpha_{11}^2 - \alpha_{31}^2) = q$.
On the other hand, from equation $(8')$, we obtain
$\alpha_{22} = \frac{1}{-p \alpha_{13}^2 + \alpha_{33}^2}$.
Substituting this expression into the first equation, it follows that\[
p \alpha_{11}^2 - \alpha_{31}^2 = q \, (\alpha_{33}^2 - p \alpha_{13}^2).
\]
From equation $(3')$, we have
$\alpha_{31} = \frac{p \, \alpha_{11} \alpha_{13}}{\alpha_{33}}$.
Replacing this value in the previous equality, we deduce
$\alpha_{33}^2 = \frac{q}{p}$,
which is absurd in $\mathbb{Q}$. Therefore, we conclude that $D \neq 0$. \\

\noindent \textbf{Case 1. $M \neq 0$.} \\
Since $M \neq 0$, we have $\alpha_{11} \alpha_{33} \neq \alpha_{31} \alpha_{13}$. Solving the above subsystem $S_1$, 
we obtain $X_2 = (\alpha_{23}\alpha_{11} - \alpha_{21}\alpha_{13}) = Y_2 = (\alpha_{33}\alpha_{21} - \alpha_{23} \alpha_{31}) = 0$.\\
With this, we obtain the following system consisting of equations $(5), (7)$, and $(8)$

\vspace{0.3cm}

$$\left\{
\begin{array}{ll}
p \alpha_{11}X_3 + \alpha_{31} Y_3 = 0 
\\
p \alpha_{12}X_3 + \alpha_{32}Y_3=0
\\
p\alpha_{13}X_3 + \alpha_{33} Y_3= 1

\end{array}
\right.$$

\vspace{0.3cm}

Thus,
$$X_3 = - \frac{\alpha_{31}Y_3}{p \alpha_{11}} \  \text{e} \  Y_3 (\underbrace{\alpha_{32} \alpha_{11} - \alpha_{12} \alpha_{31}}_{D \neq 0}) = 0 \Rightarrow X_3 = Y_3 = 0.$$
And the last equation reduces to the absurdity $0 = 1$. Therefore, this case does not occur.\\

\noindent \textbf{Case 2. $M = 0$}\\ 
Since $M = 0$, we have $\alpha_{11} \alpha_{33} = \alpha_{31} \alpha_{13}$.
Note that $M$ is the determinant of the matrix
\[
\begin{pmatrix}
    \alpha_{11} & \alpha_{31}\\
    \alpha_{13} & \alpha_{33}
\end{pmatrix}
\]
Therefore, there exists $s \in \mathbb{Q}$ such that $(\alpha_{11}, \alpha_{31}) = s(\alpha_{13}, \alpha_{33})$, that is, $\alpha_{11} = s \alpha_{13}$ and $\alpha_{31} = s \alpha_{33}$. Substituting this information into $(4)$, we have
$$(\alpha_{23}s - \alpha_{21})(ps\alpha_{13}^2 - s\alpha_{33}^2)=0.$$
If $\alpha_{21} = s \alpha_{23}$, then the first and third columns of the matrix
\[
\begin{pmatrix}
    \alpha_{11} & \alpha_{12} & \alpha_{13}\\
     \alpha_{21} & \alpha_{22} & \alpha_{23}\\
    \alpha_{31} & \alpha_{32} & \alpha_{33}
\end{pmatrix}
\]
are multiples and $\varphi$ is not an automorphism, which is an absurd. Therefore, $p s \alpha_{13}^2 - s \alpha_{33}^2 = 0$. Then, $p = \frac{\alpha_{33}^2}{\alpha_{13}^2}$, another contradiction. Thus, we cannot have $M = 0$. Hence, the system $S$ has no solution in $\mathbb{Q}^9$.
\end{proof}
We obtain the main result.

\begin{thmC}
The Mal'cev $\mathbb{Q}$-completion of the free nilpotent $r$-generated group $N_{r, c}$ of class $c$ has finitely many orbits under automorphisms if, and only if either $c \leq 2$, or $r = 2$ and $c = 3$. 
\end{thmC}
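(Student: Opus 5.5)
The proof assembles the propositions of this section, splitting according to the pair $(r,c)$. For the ``if'' direction there are three cases. If $c=1$, then $N_{r,1}^{\mathbb{Q}}\cong\mathbb{Q}^r$ is a finite-dimensional $\mathbb{Q}$-vector space. Its automorphism group $GL_r(\mathbb{Q})$ acts transitively on the nonzero vectors, so there are exactly two orbits. If $c=2$ (and $r\ge 2$), Proposition~\ref{teorema4.1} gives exactly $\lambda(G)+2$ orbits, and by the cited width result $\lambda(N_{r,2}^{\mathbb{Q}})=[r/2]$, so this number is finite. If $r=2$ and $c=3$, Proposition~\ref{propositionoo} gives exactly $4$ orbits. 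The degenerate case $r=1$ should be noted separately: there $N_{1,c}=\mathbb{Z}$ and the completion is $\mathbb{Q}$, with two orbits. This case is covered by the convention that the class is then $1\le 2$.

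For the ``only if'' direction, assume $c\ge 3$ and $(r,c)\neq(2,3)$. We must show that $G=N_{r,c}^{\mathbb{Q}}$ has infinitely many automorphism orbits. The key tool is a reduction: if $N$ is a characteristic subgroup of $G$, then every automorphism of $G$ induces one of $G/N$, and each $\operatorname{Aut}(G)$-orbit of $G$ maps into a single orbit of the image of $\operatorname{Aut}(G)$ in $\operatorname{Aut}(G/N)$. That image is a subgroup of $\operatorname{Aut}(G/N)$, so each of its orbits lies inside one $\operatorname{Aut}(G/N)$-orbit. Hence the number of $\operatorname{Aut}(G/N)$-orbits on $G/N$ is at most $\omega(G)$. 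A bound in terms of the induced group alone would not suffice, so the comparison with the full $\operatorname{Aut}(G/N)$ is essential. We apply this with $N=\gamma_{c'+1}(G)$, a term of the lower central series, which is characteristic. The quotient $G/\gamma_{c'+1}(G)$ is isomorphic to $N_{r,c'}^{\mathbb{Q}}$, since completion commutes with passing to the free nilpotent quotient. Consequently, if $N_{r,c'}^{\mathbb{Q}}$ has infinitely many orbits, so does $N_{r,c}^{\mathbb{Q}}$ for every $c\ge c'$.

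It then suffices to treat two minimal cases. If $r=2$, then $c\ge 4$, and the proposition on $N_{2,c}^{\mathbb{Q}}$ for $c>3$ applies directly; alternatively one reduces to $N_{2,4}^{\mathbb{Q}}$ by the quotient argument above. If $r\ge 3$ and $c\ge 3$, we reduce to $c=3$ by the quotient argument and invoke Proposition~\ref{theorem4.4}.

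The main obstacle is justifying the reduction step cleanly. One must identify $G/\gamma_{c'+1}(G)$ with the Mal'cev completion of $N_{r,c'}$; the paper's own induction instead uses $Z(G)$, which for free nilpotent groups coincides with $\gamma_c(G)$. One must also verify that orbits do not split when passing from the induced automorphisms to all of $\operatorname{Aut}(G/N)$. That the induced maps exhaust $\operatorname{Aut}(G/N)$ follows from Proposition~\ref{automorfismosNQ}, because any choice of images of the generators with invertible linear part lifts. Even without this, the inequality goes in the right direction for our purpose. With these two points checked, the theorem follows by combining the cases.
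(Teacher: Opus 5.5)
Your proposal is correct and follows the paper's route: Theorem C is obtained by combining Propositions \ref{teorema4.1} and \ref{propositionoo} (plus the vector-space case $c=1$) for the ``if'' direction with the two non-finiteness propositions for the ``only if'' direction. Your quotient reduction, which uses that $\operatorname{Aut}(G/N)$ has at most $\omega(G)$ orbits on $G/N$ for characteristic $N$, is the same device the paper uses at the end of its $N_{2,c}^{\mathbb{Q}}$ proposition, and stating it explicitly also covers the case $r\ge 3$, $c\ge 4$, which the paper leaves implicit.
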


\bigskip

The authors would like to thank Raimundo de Araújo Bastos Júnior for the fruitful conversations about Theorem B.


\begin{thebibliography}{99}


\bibitem{N_0} A. B. Apps, \textit{On the Structure of $\aleph_0$-Categorical Groups}, J. Algebra,  81 (1983), 320-339.

\bibitem{BP} A. B. Apps, \textit{Boolean powers of groups}, Math. Proc. Camb. Phil. Soc., 91 (1982), 375 - 396.



\bibitem{verde} A. E. Clement, S. Majewicz, and M. Zyman, {\it The theory of nilpotent groups}, Springer, (2017).

\bibitem{BD} Bastos R. A. and A. C Dantas, {\it FC-groups with finitely many automorphism orbits}, J. Algebra, 516 (2018) 401-413.

\bibitem{BDdM} Bastos R. A. and A. C. Dantas, and E. de Melo, \textit{Virtually nilpotent groups with finitely many orbits under automorphisms}, Arch. Math. 116 (2021), 261–270.

\bibitem{Osin} D. Osin, {\it Small cancellations over relatively hyperbolic groups and embedding theorems}, Annals of Mathematics,  172 (2010)  1-39. 





\bibitem{J} E. Jabara, \emph{Abelian automorphisms groups with a finite number of orbits}, J. Algebra, 323 (2010), 2798–2817


\bibitem{R} J. G. Rosenstein, \emph{$\aleph_{0}$-Categoricity of Groups}, J. Algebra, 25 (1973), 435-467.

\bibitem{MS2} M. Schwachhöfer and M. Stroppel, Finding representatives for the orbits under the automorphism group of a bounded abelian group, J. Algebra 211 (1) (1999), 225-239.

\bibitem{S1} M. Stroppel, \emph{Locally compact groups with many automorphisms}, J. Group Theory 4 (2001), 427-455

\bibitem{S2} M. Stroppel, \textit{Locally compact groups with few orbits under automorphisms}, Top. Proc.,  26 (2) (2002), 819-842.

\bibitem{Hall} M. Hall, The Theory of groups, Macmillan, New York, 1959. 

\bibitem{M} O. Macedo\'nska, {\it On difficult problems and locally graded groups}, J. Math. Sci. (N.Y.), 142 (2007) 1949-1953.























 







\bibitem{larguracomutator} V. A. Roman'kov, {\it The commutator width of some relatively free lie algebras
and nilpotent groups}. Siberian Mathematical Journal, (2016), 679–695.

 
\end{thebibliography}
\end{document}